\newcommand{\cmark}{\ding{51}}%
\newcommand{\xmark}{\ding{55}}%
\newcommand{\natbar}{\overline{\mathbb{N}}}
\newcommand{\nat}{\mathbb{N}}
\newcommand{\rational}{\mathbb{Q}}
\newcommand{\real}{\mathbb{R}}
\newcommand{\obase}[1]{\overrightarrow{\mathfrak{B}}^{{#1}}}
\newcommand{\dQcircle}{\overrightarrow S^{\hspace{-0.6mm}1}_{\hspace{-1mm}\scriptscriptstyle\rational}}
\newcommand{\Qcircle}{S^1_{\scriptscriptstyle\rational}}
\newcommand{\tcircle}{S^1}
\newcommand{\dcircle}{\overrightarrow {S^1}}
\newcommand{\LPO}{\mathbf{Lpo}}	
\newcommand{\Top}{\mathbf{Top}}
\newcommand{\ie}{i.e.}	
\newcommand{\dreal}{\overset{\mathbf{\to}}{\real}}
\newcommand\Ecal{\mathcal E}
\newcommand{\pyccmd}[1]{\overrightarrow{\mathcal{O}}({#1})}
\DeclareFontFamily{U}{tipa}{}
\DeclareFontShape{U}{tipa}{m}{n}{<->tipa10}{}
\newcommand{\arcc@char}{{\usefont{U}{tipa}{m}{n}\symbol{62}}}%
\newcommand{\arcc}[1]{\mathpalette\arcc@arc{#1}}
\newcommand{\arcc@arc}[2]{%
  \sbox0{$\m@th#1#2$}%
  \vbox{
    \hbox{\resizebox{\wd0}{\height}{\arcc@char}}
    \nointerlineskip
    \box0
  }%
}
\theoremstyle{plain}
\newtheorem{theo}{Theorem}[section]
\theoremstyle{definition}
\newtheorem{prop}[theo]{Proposition}
\theoremstyle{definition}
\newtheorem{corol}[theo]{Corollary}
\theoremstyle{definition}
\newtheorem{lem}[theo]{Lemma}
\theoremstyle{definition}
\newtheorem{defi}[theo]{Definition}
\theoremstyle{remark}
\newtheorem{rmq}[theo]{Remark}
\theoremstyle{definition}
\theoremstyle{definition}
\newtheorem{ex}[theo]{Examples}
\begin{document}

\title{Non-existing and ill-behaved coequalizers\\of locally ordered spaces}

\date{}

\author{Pierre-Yves Coursolle, and Emmanuel Haucourt\\{\small Laboratoire d'Informatique de l'\'Ecole polytechnique (LIX - UMR 7161),}\\}

\maketitle

\begin{abstract}
Categories of locally ordered spaces are especially well-adapted to the realization of \emph{most} precubical sets \cite{FGR06}, though their colimits are not so easy to determine (in comparison with colimits in the category of \emph{d-spaces} for example \cite[1.4.0]{Grandis09}). 
We use the plural here, as the notion of a locally ordered space vary from an author to another, only differing according to seemingly anodyne technical details.
As we explain in this article, these differences have dramatic consequences on colimits.
In particular, we show that most categories of locally ordered spaces are \emph{not} cocomplete, thus answering a question that was neglected so far. 
The strategy is the following: given a directed loop $\gamma$ on a locally ordered space $X$, we try to identify the image of $\gamma$ with a single point. 
If it were taken in the category of d-spaces, such an identification would be likely to create a vortex \cite[1.4.7]{Grandis09}, while locally ordered space have no vortices.
Concretely, the antisymmetry of local orders gets more points to be  identified than in a mere topological quotient.
However, the effect of this phenomenon is in some sense limited to the neighbourhood of (the image of) $\gamma$.
So the existence and the nature of the corresponding coequalizer strongly depends on the topology around the image of $\gamma$.
As an extreme example, if the latter forms a connected component, the coequalizer exists and its underlying space matches with the topological coequalizer.
\end{abstract}

{\scriptsize \textbf{Keywords}: Directed topology; precubical set; realization; concurrency theory; vortex; colimit of locally ordered spaces;}

\section{Introduction}
The usage of methods from Algebraic Topology in the study of Concurrency Theory was explicitly initiated in \cite{FGR06}. 
One of its key ingredient is the \emph{realization} of \emph{precubical sets}\footnote{Precubical sets are to higher dimensional automata as graphs are to automata. 
For a detailed account of the importance of higher dimensional automata in concurrency theory see \cite{vanGlabbeek06}.
The construction described in \cite{FGR06} actually requires some restrictions on the precubical sets to be realized.} in the category of locally ordered spaces (Definition \ref{defi-lpo}).
The construction of realizations requires certain colimits to exist in the category under consideration, which is guaranteed as soon as it is cocomplete. Unfortunately `the' category of locally ordered spaces is not. We provide several variants of this category, only differing by the separation properties their underlying spaces are required to satisfy. Quite surprisingly, these variations have dramatic consequences on colimits, even on their mere existence. Yet, for the rest of the introduction, we remain vague about the variant under consideration and write $\LPO$ to denote one of them. After having provided a catalog of pathological coequalizer in $\LPO$ (and proven that some of them not even exist), our purpose is to give an insight into locally ordered spaces colimits. 

No algebraic topologist would imagine working in categories that are not cocomplete\footnote{E.g. every model category is required to be bicomplete.}, or in which certain seemingly obvious colimits are so ill-behaved. It is thus necessary, before dwelving in the technical details, to explain why locally ordered spaces should be taken seriously. As a first argument, we cite the work by J. D. Lawson in which the equivalence between \emph{ordered manifolds} (a certain kind of locally ordered spaces) and \emph{conal manifolds} is established \cite[Theorem 2.7]{LawsonJD89}, the relation to Lie theory of semigroups \cite{HHL89}, and also to \emph{causal orientation} in cosmology \cite[pp.22--28]{Segal76}. Beyond that somewhat argument of authority, and adopting the computer scientist point of view, the crucial property of locally ordered spaces is that they are free of \emph{vortices} (a vortex is a point every neighbourhood of which contains a non-trivial directed loop). Regardless of the chosen perspective, vortices are pathological. Then we have to face a dilemma.  
On one hand, we can require our working category to be \emph{topological} over $\Top$ \cite[7.3]{Borceux94b} so colimits be well-behaved. We thus have a convenient framework for homotopical methods, but in which vortices are pervasive.
This is the case with the category of \emph{d-spaces} \cite[1.4.7]{Grandis09}. 
On the other hand, we can ban vortices from our class of models, though this comes at the price of missing or poorly behaved colimits.
This is the case with all the variants of the category of locally ordered spaces considered in this article. 

A natural idea to prove that $\LPO$ is not cocomplete consists of identifying all the points visited by a directed loop to contradict the fact that a locally ordered space has no vortex. We experiment this approach on the standard directed cylinder $\tcircle\times\real$, see Example \ref{exCylinder} \footnote{The latter is, in particular, a conal manifold in the sense of 
\cite{LawsonJD89}: the cones are induces by the canonical parallelization \cite[Appendix 3B]{BG80}.}.
More precisely, one tries to `create' a vortex by identifying all the points of the form $(s,0)$ with $s\in\dcircle$. Depending on the category of locally ordered spaces under consideration, this coequalizer may or may not exist, see Corollaries \ref{corol:colimiteCylindre}, \ref{corol:colimiteCylindreNachbin}, and \ref{corol:colimiteCylindreHausdorff} in the third section of this article.

A lucid analysis of the directed cylinder example reveals that any coequalizer in $\LPO$ (when it exists) is obtained by identifying points of a topological space. Nevertheless, the antisymmetry locally imposed by the elements of an ordered basis (Definition \ref{defi-orderedBasis}) often forces much more points to be identified than in an ordinary topological quotient. Yet, the effects of the phenomenon described above may be limited by the underlying topology. Formally, the more points are ordered, the more points are identified by quotient construction. Dually, the finer  the topology is, the less extra points are identified. The fourth and fifth sections illustrate this claim:  

In the fourth section, we start again from the standard cylinder. However, we equip it with a local order so that it contains a countable family of pairwise disconnected directed loops which converges, in a certain sense, to another directed loop $\gamma$.\footnote{The directed loops mentioned here are to be understood as nontrivial.} Trying to identify all the points visited by $\gamma$ results in a diagram whose coequalizer does not exist. 

In the fifth section, we identify a section of a cylinder whose basis is totally disconnected. In this case, the coequalizer exists, and its underlying topology even matches the coequalizer of the underlying topological spaces.

Despite the pathological behaviour of colimits in the category of locally ordered spaces, a wide class of precubical sets can be realized in it. Moreover, for any precubical set of this class, the underlying space of the realization in $\LPO$ matches the realization in $\Top$ \cite{FGR06}. 
In contrast, we observe that the above property is no longer satisfied if we consider cubical sets instead of precubical ones. Indeed, identifying a section of the directed cylinder results in a colimit that is very close to the one required to realize the following cubical set:  
$$
K_2\quad=\quad\{s\}\ 
\qquad\text{with}\qquad 
\partial^+_{1}s=\partial^-_{1}s 
\qquad
\sigma\partial^-_{0}\partial^-_{0}s=\partial^-_{0}s\ . 
$$
Concretely, this cubical set identifies the two vertical edges of the square $s$ and reduces the lower horizontal edge to a single point.

Hence, it seems that locally ordered spaces have been especially tailored for precubical set realization. 
This observation, together with the fact that they naturally occur in some well established branches of mathematics and physics, have convinced the authors that locally ordered spaces deserve a special attention.

\section{Locally ordered spaces}

For all basic definitions related to General Topology, we refer to the standard textbooks \cite{Munkres17} and \cite{Kelley55}. The partial order of a poset $P$ is denoted by $\leq_P$, its underlying set by $P$ (or $|P|$ when we need to emphasize on the distinction).  
\begin{defi}[Ordered basis]\label{defi-orderedBasis}
Let $X$ be a topological space. An ordered basis on $X$ is a set $\overrightarrow{\mathfrak{B}}$ of partially ordered sets such that:

\begin{itemize}
    \item the underlying sets of the elements of $\obase{}$ form a basis of the topology of $X$, and 
    \item for all $x\in X$ and all $B,B'\in\overrightarrow{\mathfrak{B}}$ such that $$x\:\in\: B\cap B'$$ there is $B''\in\overrightarrow{\mathfrak{B}}$ such that $$x\:\in\:B''\:\subset\:B\cap B'$$ and the partial order $\leq_{B''}$ is so that $p\leq_{B''}q$ {implies} $p\leq_{B}q$ and $p\leq_{B'}q$. Since this relation between ordered subsets is pervasively used throughout the paper, we give it a name: for every pair of ordered sets $B$ and $B'$, we denote $B\subset_{lax}B'$ when $|B|\subset |B'|$, and $p\leq_B q$ implies $p\leq_{B'} q$ for all $p,q\in B$.
\end{itemize}
The basis is said to be \emph{strict} when, in the above definition, the partial order $\leq_{B''}$ actually coincides with the restrictions of $\leq_{B}$ and $\leq_{B'}$ to $B''$; this stronger relation will be denoted by $B\subset_{str}B'$. Most of the examples met in this article are of the latter type.

An ordered basis $\overrightarrow{\mathfrak{B}}'$ is coarser than $\overrightarrow{\mathfrak{B}}$ if, for every $x\in X$ and for every $B'\in\overrightarrow{\mathfrak{B}}'$ such that $x\in B'$, there is $B\in\overrightarrow{\mathfrak{B}}$ such that $$x\:\in\: B\subset_{lax} B'\:.$$
We say that $\overrightarrow{\mathfrak{B}}$ and $\overrightarrow{\mathfrak{B}}'$ are equivalent when, in addition, $\overrightarrow{\mathfrak{B}}$ is coarser than $\overrightarrow{\mathfrak{B}}'$.
\end{defi}

It is natural to define a notion of strict equivalence between strict basis by replacing the order $\subset_{lax}$ in the above definition by $\subset_{str}$ but it does not bring anything new:

\begin{prop}
Two strict ordered basis $\obase{}$ and $\obase{'}$ on the topological space $X$ are equivalent if, and only if, they are strictly equivalent.
\end{prop}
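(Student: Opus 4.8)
The plan is to dispatch the trivial implication first and then concentrate on the converse. Since $B\subset_{str}B'$ entails $B\subset_{lax}B'$ (the equality $\leq_B=\leq_{B'}|_{B}$ in particular yields the inclusion $\leq_B\subseteq\leq_{B'}|_{B}$), a strictly equivalent pair is automatically equivalent; this costs one line. For the substantial direction I would assume $\obase{}$ and $\obase{'}$ equivalent and prove they are strictly equivalent. As the hypothesis is symmetric in the two bases, it suffices to establish that $\obase{}$ is \emph{strictly} coarser than $\obase{'}$, i.e.\ that for every $x\in X$ and every $B'\in\obase{'}$ with $x\in B'$ there is $B\in\obase{}$ with $x\in B\subset_{str}B'$. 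Observe that whenever $B\subset_{lax}B'$ the inclusion $\leq_B\subseteq\leq_{B'}|_{B}$ is free; the whole difficulty is to manufacture a $B$ for which the \emph{reverse} inclusion $\leq_{B'}|_{B}\subseteq\leq_{B}$ also holds, and a single use of ``coarser than'' cannot deliver this.

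Fix $x$ and $B'$. I would run a short back-and-forth, alternating the two halves of the equivalence and interleaving the strictness of each basis. First, ``$\obase{'}$ coarser than $\obase{}$'' yields $B\in\obase{}$ with $x\in B\subset_{lax}B'$; then ``$\obase{}$ coarser than $\obase{'}$'' yields $B''\in\obase{'}$ with $x\in B''\subset_{lax}B$; then strictness of $\obase{'}$ applied to the pair $B',B''\ni x$ yields $B'''\in\obase{'}$ with $x\in B'''\subset B'\cap B''$ on which $\leq_{B'''}$ is simultaneously the restriction of $\leq_{B'}$ and of $\leq_{B''}$. The payoff of this first half is the key local fact: for all $p,q\in B'''$ one has $p\leq_{B'}q\Rightarrow p\leq_{B''}q\Rightarrow p\leq_{B}q$, i.e.\ on the neighbourhood $B'''$ of $x$ the order of $B'$ is \emph{contained} in that of $B$ — precisely the inclusion that $\subset_{lax}$ refused to give. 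It then remains to transport this back into $\obase{}$: applying ``$\obase{'}$ coarser than $\obase{}$'' to $B'''$ produces $\hat B\in\obase{}$ with $x\in\hat B\subset_{lax}B'''\subset B$, and strictness of $\obase{}$ applied to $\hat B,B$ yields $\tilde B\in\obase{}$ with $x\in\tilde B\subset\hat B$ and $\leq_{\tilde B}=\leq_{B}|_{\tilde B}$, so that $\tilde B\subset_{str}B$.

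I would then verify that this $\tilde B$ is the desired set. Its underlying set lies inside $B'''\subset B'$; the inclusion $\leq_{\tilde B}\subseteq\leq_{B'}|_{\tilde B}$ follows by composing $\leq_{\tilde B}=\leq_{B}|_{\tilde B}$ with $B\subset_{lax}B'$; and the reverse inclusion follows because any $p\leq_{B'}q$ with $p,q\in\tilde B\subset B'''$ forces $p\leq_{B}q$ by the key local fact, whence $p\leq_{\tilde B}q$ since $\tilde B\subset_{str}B$. Thus $\leq_{\tilde B}=\leq_{B'}|_{\tilde B}$ and $\tilde B\subset_{str}B'$, as wanted; exchanging the roles of the two bases gives the other strict coarseness, hence strict equivalence. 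The main obstacle is exactly the reverse order-inclusion isolated above: it cannot be produced by one ``coarser than'' step, and the crux of the argument is the remark that a single round trip through $\obase{'}$ (producing $B'''$) converts the only available inclusion into its opposite on a small enough neighbourhood, after which strictness of $\obase{}$ closes the gap. A secondary but routine point to record explicitly is the transitivity of $\subset_{lax}$ together with the implication $\subset_{str}\Rightarrow\subset_{lax}$, both used repeatedly above.
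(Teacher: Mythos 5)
Your proposal is correct and is essentially the paper's own proof: the same five-step back-and-forth chain (lax, lax, strictness of $\obase{'}$, lax, strictness of $\obase{}$) followed by the same two-way order verification on the final element, with your ``key local fact'' being exactly the inclusion the paper checks inline. One cosmetic remark: with the paper's convention, the displayed condition you prove (``for every $B'\in\obase{'}$ containing $x$ there is $B\in\obase{}$ with $x\in B\subset_{str}B'$'') is called ``$\obase{'}$ is strictly coarser than $\obase{}$'' rather than the other way round, but since you spell the condition out explicitly this naming slip has no mathematical effect.
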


\begin{proof}
Two strictly equivalent basis are equivalent because the relation $\subset_{str}$ is stronger than the relation $\subset_{lax}$, .

Conversely, assume that $\obase{}$ and $\obase{'}$ are equivalent.  Let $B\in \obase{'}$ containing a point $x\in X$. 
There exist $A\in \obase{}$ such that $x\in A \subset_{lax} B$, and $B'\in \obase{'}$ such that $x\in B' \subset_{lax} A$. Since the basis $\obase{'}$ is strict, there is $B''\in\obase{'}$ such that $x\in B'' \subset_{str} B,\:B'$. Once again, since $\obase{}$ and $\obase{'}$ are equivalent, there is $A'\in \obase{}$ such that $x\in A' \subset_{lax} B''$. Since $\obase{}$ is a strict basis, there is $A''\in\obase{}$ such that $x\in A'' \subset_{str} A,\:A'$. We now check that $A''\subset_{str} B$. Since $A''\subset_{str} A\subset_{lax} B$, we have $A''\subset_{lax} B$. Let $x',x''\in A''$ such that $x'\leq_{B} x''$. We have $x'\leq_{B''} x''$ because $B'' \subset_{str} B$. From $B'' \subset_{str} B'$ and $B' \subset_{lax} A$, we deduce that $x'\leq_{A} x''$. Finally, since $A'' \subset_{str} A$, we obtain $x'\leq_{A''} x''$.
The other direction in the definition of strict equivalence is obtained by symmetry. The above reasoning is summarized in the following diagram:

\[
\begin{tikzcd}
{} & {A} \ar["\emph{lax}",sloped,r] & {B} \\
{} & {B'} \ar["\emph{lax}"',sloped,u]& {} \\
{} & {B''} \ar["\emph{str}"',sloped,u]  \ar["\emph{str}"',sloped,bend right,ruu] & {} \\
{A''} \ar["\emph{srt}",sloped,bend left,ruuu] \ar["\emph{str}"',sloped,r] & {A'}\ar["\emph{lax}"',sloped,u]  & {} \\
\end{tikzcd}
\]
\end{proof}

\begin{defi}\label{defi-prod_ord_bas}
If $\obase{}$ and $\obase{'}$ are ordered basis of topological spaces $X$ and $X'$ then the collection 
$$
\obase{}\times\obase{'}
\quad
=
\quad
\big\{B\times B'\ \big|\ B\in\obase{}\text{ and }B'\in\obase'\big\}
$$
is an ordered basis on $X\times X'$. Note that if $\obase{}$ and $\obase{'}$ are strict then so is $\obase{}\times\obase{'}$.
\end{defi}

The equivalent class of $\mathfrak{B}$ admits a greatest element $\overrightarrow{\mathcal{O}}(\overrightarrow{\mathfrak{B}})$ with respect to inclusion. Its elements are the partially ordered sets $A$ such that: \begin{itemize}
	\item the underlying set of $A$ is included in $X$, and
	\item for all $x\in A$, there exists  $B\in\overrightarrow{\mathfrak{B}}$ such that $x\:\in\: B\subset_{lax} A$.
\end{itemize}

One readily checks that the underlying set of any element of $\overrightarrow{\mathcal{O}}(\obase{})$ is open in $X$.

\begin{lem}
\label{lem:1}
Let $\overrightarrow{\mathfrak{B}}$ be an ordered basis on the topological space $X$, and $O$ be an element of $\overrightarrow{\mathcal{O}} (\obase{})$.
Every open subset $O'$ of $O$ 
equipped with the restriction of $\leq_O$ to $O'$, belongs to $\overrightarrow{\mathcal{O}}(\overrightarrow{\mathfrak{B}})$.
\end{lem}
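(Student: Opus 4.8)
The plan is to verify directly the two defining conditions for membership in $\overrightarrow{\mathcal{O}}(\obase{})$ applied to $O'$ equipped with the restriction of $\leq_O$. The first condition, that the underlying set of $O'$ is contained in $X$, is immediate from $O'\subset O\subset X$. All the content therefore lies in the second condition: for every $x\in O'$ I must exhibit some $B''\in\obase{}$ with $x\in B''\subset_{lax}O'$.

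The key preliminary observation is that $O'$ is open in $X$. Indeed, the remark preceding the lemma guarantees that the underlying set of $O$ is open in $X$; since $O'$ is open in the subspace $O$, it is of the form $V\cap O$ with $V$ open in $X$, hence open in $X$ as well. This is what will let me invoke the basis property of $\obase{}$ inside $O'$.

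Now fix $x\in O'$. On the one hand, since $O\in\overrightarrow{\mathcal{O}}(\obase{})$ and $x\in O$, there is $B\in\obase{}$ with $x\in B\subset_{lax}O$. On the other hand, since the underlying sets of $\obase{}$ form a basis of the topology and $O'$ is open with $x\in O'$, there is $B_1\in\obase{}$ with $x\in B_1\subset O'$ (set inclusion). Applying the second axiom of Definition \ref{defi-orderedBasis} to the point $x\in B\cap B_1$ yields $B''\in\obase{}$ such that $x\in B''\subset B\cap B_1$ and $p\leq_{B''}q$ implies both $p\leq_{B}q$ and $p\leq_{B_1}q$.

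It then remains to check $B''\subset_{lax}O'$. The set inclusion $|B''|\subset|O'|$ follows from $B''\subset B_1\subset O'$. For the order, I would take $p,q\in B''$ with $p\leq_{B''}q$; then $p\leq_{B}q$ by the choice of $B''$, and $p\leq_{O}q$ because $B\subset_{lax}O$. As $p,q\in O'$ and $\leq_{O'}$ is by definition the restriction of $\leq_O$, this gives $p\leq_{O'}q$, so $B''\subset_{lax}O'$, completing the verification. I expect no genuine obstacle here; the only subtlety worth flagging is that the witness $B$ coming from $O\in\overrightarrow{\mathcal{O}}(\obase{})$ need not itself sit inside $O'$, which is precisely why one must intersect it with a basis element contained in $O'$ and rely on the ordered-basis compatibility axiom to transport the order relation down to $B''$.
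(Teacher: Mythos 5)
Your proof is correct and follows essentially the same route as the paper's: both intersect a basis element $B_1\subset O'$ (from openness of $O'$ in $X$) with a witness $B\subset_{lax}O$ (from $O\in\overrightarrow{\mathcal{O}}(\obase{})$) via the second axiom of Definition \ref{defi-orderedBasis}, then transport the order through $B\subset_{lax}O$ and restriction to $O'$. Your write-up is in fact somewhat more explicit than the paper's, which compresses these steps into a single line.
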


\begin{proof}
Let $x\in O'$. Since $O'$ is an open of $X$, there exists $B\in \overrightarrow{\mathfrak{B}}$ such that $x\:\in\:B\subset O'$.
Then we have $B'\in \overrightarrow{\mathfrak{B}}$ such that $x\in B'\subset_{lax} O,\:B$.
Thus, we have $B'\subset O'$ and $\leq_{B'}$ is included in $\leq'$.
\end{proof}

\begin{defi}
An \emph{ordered space} is a topological space $X$ equipped with a partial order. A \emph{Nachbin ordered space} is an ordered space whose partial order is closed as a subspace of the product $X\times X$.\footnote{Nachbin ordered spaces should not be confused with Nachbin-Hewitt spaces, which is another name for `realcompact spaces' \cite[p.166]{Johnstone82}.}
\end{defi}

\begin{defi}[Locally ordered spaces]\label{defi-lpo}
A \emph{locally ordered space} is an ordered pair $(X,\Ecal)$ where $\Ecal$ is an equivalence class of ordered basis on the topological space $X$. 
The greatest element of $\Ecal$, whose elements are called the \emph{open posets} of $(X,\Ecal)$, is denoted by $\overrightarrow{\mathcal{O}}(X,\Ecal)$. 
We will often use the same denotation for a locally ordered space and its underlying topological space. 
A locally ordered space is said to be Hausdorff when so is its underlying topological space. A \emph{strictly locally ordered space} is an ordered space $(X,\Ecal)$ such that $\Ecal$ contains a strict ordered basis.
\end{defi}

\begin{defi}
A \emph{locally Nachbin ordered space} $X$ is a locally ordered space such that for every $x\in X$ and every $O\in \overrightarrow{\mathcal{O}}(X)$ containing $x$, there exists $O'\in \overrightarrow{\mathcal{O}}(X)$ containing $x$, which is a Nachbin ordered space (with the topology inherited from $X$)  
such that $O'\subset_{lax}O$.
\end{defi}

In strictly locally ordered spaces, we have the following characterisation :

\begin{prop}
Assume that $(X,\Ecal)$ is a strictly locally ordered space with a chosen strict basis $\obase{}\in\Ecal$. Then $(X,\Ecal)$ is a locally Nachbin ordered space if, and only if, for every $x\in X$, there is $B\in\obase{}$ containing $x$ such that $B$ is a Nachbin space.
\end{prop}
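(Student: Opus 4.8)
The plan is to reduce both implications to one elementary topological fact together with the strictness of $\obase{}$. The fact is: if $(Y,\leq_Y)$ is a Nachbin ordered space and $Z\subseteq Y$ is any subspace equipped with the restricted order $\leq_Y|_Z=\;\leq_Y\cap\,(Z\times Z)$, then $(Z,\leq_Y|_Z)$ is again Nachbin, because the intersection of the closed set $\leq_Y$ with the subspace $Z\times Z$ is closed for the product (equivalently, subspace) topology of $Z\times Z$. Since open posets and basis elements carry the subspace topology of $X$, this lets me propagate the Nachbin property to smaller open posets, provided I can also match the relevant orders; this matching is exactly where strictness enters.

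For the implication from a pointwise Nachbin basis to local Nachbin-ness, I fix $x\in X$ and an open poset $O\in\overrightarrow{\mathcal{O}}(X)$ with $x\in O$. By hypothesis there is a Nachbin $B_x\in\obase{}$ with $x\in B_x$, and since $O$ is an open poset there is $B\in\obase{}$ with $x\in B\subset_{lax}O$. Applying strictness of $\obase{}$ to the pair $B_x,B$ at the point $x$ yields $B''\in\obase{}$ with $x\in B''$, $B''\subset_{str}B_x$ and $B''\subset_{str}B$. From $B''\subset_{str}B_x$ we get $\leq_{B''}=\leq_{B_x}|_{B''}$, so $B''$ is Nachbin by the fact above, as an open subspace of the Nachbin space $B_x$; and from $B''\subset_{str}B\subset_{lax}O$ we get $B''\subset_{lax}O$. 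As $B''$ is a basis element it is an open poset, so $O'=B''$ witnesses the local Nachbin condition.

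For the converse, I fix $x\in X$ and choose any $B_0\in\obase{}$ with $x\in B_0$; this $B_0$ is an open poset, so the local Nachbin hypothesis supplies a Nachbin open poset $O'$ with $x\in O'\subset_{lax}B_0$. Since $O'\in\overrightarrow{\mathcal{O}}(X)$ there is $B_1\in\obase{}$ with $x\in B_1\subset_{lax}O'$, and strictness applied to $B_1,B_0$ gives $B_2\in\obase{}$ with $x\in B_2$, $B_2\subset_{str}B_1$ and $B_2\subset_{str}B_0$. The crux is then a sandwich on $B_2\subseteq B_1\subseteq O'\subseteq B_0$: one has $\leq_{B_2}=\leq_{B_1}|_{B_2}\ \subseteq\ \leq_{O'}|_{B_2}\ \subseteq\ \leq_{B_0}|_{B_2}=\leq_{B_2}$, where the two inclusions come from $B_1\subset_{lax}O'$ and $O'\subset_{lax}B_0$, and the two equalities from $B_2\subset_{str}B_1$ and $B_2\subset_{str}B_0$. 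Hence $\leq_{B_2}=\;\leq_{O'}|_{B_2}$, and the fact above (now with $Y=O'$ and $Z=B_2$) shows that $B_2$ is Nachbin, as required.

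The routine parts are the transitivity of $\subset_{lax}$ and that $\subset_{str}$ refines $\subset_{lax}$, together with the observation already made in the excerpt that every basis element is an open poset. I expect the only genuinely delicate point to be the converse direction: a basis element lying $\subset_{lax}$ below the Nachbin open poset $O'$ need not inherit its order, and one cannot conclude Nachbin-ness from a mere inclusion of orders. The sandwich resolves this by forcing $\leq_{B_2}$ to coincide with the restriction of the closed order $\leq_{O'}$, which is possible precisely because strictness lets me compare $B_2$ simultaneously with $B_1$ (captured from below $O'$) and with $B_0$ (capturing $O'$ from above).
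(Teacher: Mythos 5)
Your proof is correct and follows essentially the same route as the paper: the paper's detailed direction uses exactly your chain $B_2\subset_{str}B_1\subset_{lax}O'$ and $O'\subset_{lax}B_0$ with $B_2\subset_{str}B_0$, concluding that $\leq_{B_2}$ coincides with the restriction of the closed order $\leq_{O'}$ and is therefore closed. The only difference is that you also write out in full the other implication (pointwise Nachbin basis elements imply locally Nachbin), which the paper dismisses as obvious, using the same strictness trick.
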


\begin{proof}
Let $x\in X$. There is $B\in \obase{}$ containing $x$. By hypothesis, we have $O\in \overrightarrow{\mathcal{O}}(X)$ such that $x\in O\subset_{lax} B$ and the partial order $\leq_O$ is closed. Then there is $B'\in \obase{}$ such that $x\in B'\subset_{lax} O$. Since $\obase{}$ is a strict ordered basis, there is $B''\in \obase{}$ such that  $x\in B''\subset_{str} B,\: B'$. 
We check that $\leq_{B''}$ is a closed partial order. 
Let $x,x'\in B''$. If $x\leq_{B''} x'$, then $x\leq_{B'} x'$ and $x\leq_{O} x'$. The other way round, if $x\leq_{O} x'$ then $x\leq_{B} x'$, and we also have $x\leq_{B''} x'$ because $B''\subset_{str} B$. We have proven that $\leq_{B''}$ is the restriction of the closed relation $\leq_O$ to $B''$. 
The converse implication is obvious.
\end{proof}

By \cite{Nachbin65}, the underlying topological space of a locally Nachbin ordered space is a locally Hausdorff space.

\begin{rmq}\label{rmq:sublocalpospace}
If $\obase{}$ is an ordered basis of the locally ordered space $X$, and $Y$ is a subspace of the underlying space of $X$, then $\{\ Y\cap B\ |\ B\in\obase{}\ \}$ is an ordered basis on $Y$ (note that if $\obase{}$ is strict then so is this basis). All the ordered basis of $Y$ obtained this way are equivalent, which allows us to define the (strictly) locally ordered subspace $Y$ of $X$. 
\end{rmq}

\begin{rmq}\label{rmq:lpo_prod}
Let $X$ and $X'$ be two locally ordered spaces: an ordered basis of $X\times X'$ is 
given by $\obase{}\times\obase{'}$, with $\obase{}$ and $\obase{'}$ being any ordered basis of $X$ and $X'$ respectively. The equivalence class of $\obase{}\times\obase{'}$ only depends on the equivalence classes of $\obase{}$ and $\obase{'}$.
\end{rmq}

\begin{rmq}\label{rmq:topToLocOrd}
Every topological space can be seen as a strictly locally ordered space with a canonical strict ordered basis which consists of all the open subsets of $X$ equipped with the equality. It is a locally Nachbin ordered space if, and only if, it is a locally Hausdorff space.
\end{rmq}

\begin{defi}\label{def:morphismLPO}
Let $X$ and $Y$ be locally ordered spaces and let $\overrightarrow{\mathfrak{B}}$ (resp. $\overrightarrow{\mathfrak{B}}'$) be an ordered basis in the equivalent class of ordered basis of $X$ (resp. $Y$). A function $f:X\to Y$ is \textit{locally increasing} at $x\in X$ when, for all $B'\in \overrightarrow{\mathfrak{B}}'$ such that $f(x)\in B'$, there exists $B\in\overrightarrow{\mathfrak{B}}$ such that $x\in B$, $f(B)\subset B'$ and $f_{B}:B\to B'$ is increasing. One verifies that this notion only depends on the equivalence classes of $\obase{}$ and $\obase{'}$.
\end{defi}

In the case where the target space is a strictly locally ordered space, we have a convenient characterisation of locally increasing maps.

\begin{prop}\label{prop:chlocallyincreas}
Let $X$ be a locally ordered space, $Y$ be a strictly locally ordered space given by a strict ordered basis $\obase{'}$  and let $f:X\to Y$ be a function. The map $f$ is locally increasing at $x\in X$ if, and only if, it is continuous at $x$ and there exists $O\in\overrightarrow{\mathcal{O}}(X)$ and $B_0\in\obase{'}$ such that $$x\in O, \quad f(O)\subset B_0,\text{ and\quad} f_{O}:O\to B_0\text{ is increasing}$$ where $f_{O}$ is the restriction of $f$ to $O$.
\end{prop}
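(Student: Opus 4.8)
The plan is to prove the two implications separately; the forward implication is essentially immediate, while the reverse implication carries the real content and is where the strictness of $\obase{'}$ is crucial. Throughout I will use that the underlying sets of $\obase{}$ and of $\obase{'}$ are open bases, and that every element of $\obase{}$ already belongs to $\overrightarrow{\mathcal{O}}(X)$ (taking $B$ itself as the witness in the defining condition of $\overrightarrow{\mathcal{O}}(\obase{})$, since $B\subset_{lax}B$).

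For the forward direction, suppose $f$ is locally increasing at $x$. To produce the pair $(O,B_0)$, I pick any $B_0\in\obase{'}$ with $f(x)\in B_0$ and feed it to Definition \ref{def:morphismLPO}: this yields $B\in\obase{}$ with $x\in B$, $f(B)\subset B_0$, and $f_B$ increasing. Since $B\in\overrightarrow{\mathcal{O}}(X)$, the choice $O:=B$ works. Continuity at $x$ follows by the same mechanism: given an open neighbourhood $V$ of $f(x)$, choose $B'\in\obase{'}$ with $f(x)\in B'\subset V$, obtain $B\in\obase{}$ with $x\in B$ and $f(B)\subset B'\subset V$, and note that $B$ is itself an open neighbourhood of $x$.

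For the reverse direction, suppose $f$ is continuous at $x$ and that $(O,B_0)$ as in the statement exists. Fix an arbitrary $B'\in\obase{'}$ with $f(x)\in B'$; I must manufacture $B\in\obase{}$ with $x\in B$, $f(B)\subset B'$ and $f_B$ increasing. Since $f(x)\in B_0\cap B'$ and $\obase{'}$ is strict, there is $B_0''\in\obase{'}$ with $f(x)\in B_0''\subset_{str}B_0,\:B'$. Continuity at $x$ provides an open neighbourhood of $x$ sent into $B_0''$; I then choose $B_2\in\obase{}$ inside that neighbourhood and $B_3\in\obase{}$ with $x\in B_3\subset_{lax}O$, both containing $x$, and apply the refinement axiom of Definition \ref{defi-orderedBasis} to $B_2$ and $B_3$ to obtain $B\in\obase{}$ with $x\in B$, $B\subset_{lax}B_3\subset_{lax}O$, and $f(B)\subset f(B_2)\subset B_0''\subset B'$.

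It remains to check that $f_B:B\to B'$ is increasing, and this is the step that forces the use of strictness. For $p\leq_B q$ in $B$, the relation $B\subset_{lax}O$ gives $p\leq_O q$, and then $f_O$ increasing gives $f(p)\leq_{B_0}f(q)$. Because $f(p),f(q)\in B_0''$ and $B_0''\subset_{str}B_0$, the order $\leq_{B_0''}$ is exactly the restriction of $\leq_{B_0}$, so $f(p)\leq_{B_0''}f(q)$; finally $B_0''\subset_{str}B'$ yields $f(p)\leq_{B'}f(q)$. The main obstacle is precisely here: one must push the inequality $f(p)\leq_{B_0}f(q)$ back into $B'$, and a merely lax inclusion $B_0''\subset_{lax}B_0$ runs in the wrong direction to permit this, so the hypothesis that $\obase{'}$ be strict cannot be dropped.
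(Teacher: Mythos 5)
Your proof is correct and follows essentially the same route as the paper's: both use the strictness of $\obase{'}$ to produce a common refinement $B_0''\subset_{str}B_0,\:B'$ of the given $B_0$ and the target basis element, then use continuity at $x$ to shrink the domain so that its image lands in $B_0''$, and conclude by the same chain of order implications (which, as you note, is exactly where a merely lax refinement would fail). The only immaterial difference is that you descend to a genuine basis element $B\in\obase{}$ via the refinement axiom of Definition \ref{defi-orderedBasis}, whereas the paper restricts $O$ to an open subset and invokes Lemma \ref{lem:1} to keep the domain-side witness in $\overrightarrow{\mathcal{O}}(X)$, relying on the basis-independence of Definition \ref{def:morphismLPO}.
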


\begin{proof}
The condition is clearly necessary.
On the other hand, we suppose that there exists $O\in\overrightarrow{\mathcal{O}}(X)$ and $B_0\in\obase{'}$ such that $x\in O$, $f(O)\subset B_0$ and $f_{O}:O\to B_0$ is increasing.
Let $B\in\obase{'}$ such that $f(x)\in B$. There is $B'\in \obase{'}$ such that $f(x)\in B'\subset_{str} B_0,\: B$.
Since $f$ is continuous at $x$, there is an open $O'\subset O$ such that $x\in O'$ and $f(O')\subset B'$.
We denote by $\leq_{O'}$ the restriction of $\leq_O$ to $O'$. By lemma \ref{lem:1} the poset  $(O',\leq_{O'})$ belongs to $\overrightarrow{\mathcal{O}}(X)$.
Then we have $f(O')\subset B'\subset B$, and:
\begin{itemize}
\item[-] the map $f_{O'}: O'\to B_0$ is increasing since $f_{O}:O\to B_0$ is increasing and $\leq_{O'}$ is the restriction of $\leq_O$, 
\item[-] the map $f_{O'}: O'\to B'$ is increasing since $f_{O'}: O'\to B_0$ is increasing, $f(O')\subset B'$, and $\leq_{B'}$ is the restriction of $\leq_{B_0}$, and finally  
\end{itemize}
the map $f_{O'}: O'\to B$ is increasing since $f_{O'}: O'\to B'$ is increasing and the relation $\leq_{B'}$ is the restriction of the relation $\leq_{B}$ to $B'$.
\end{proof}

\section{Cylinder}\label{section:cylinder}

\noindent
The compact unit circle (with its usual topology) is $$S^1\quad:=\quad\{\:z\in\mathbb{C}\quad|\quad\vert z\vert =1\:\}\quad=\quad\{e^{ix}|\:x\in \mathbb{R}\}\quad.$$

\noindent
\begin{defi}\label{def:dcirc}
An \emph{ordered arc} is 
a \textit{proper open arcs} of $S^1$, \ie\ a subset of the form
$$\arcc{ab}\quad:=\quad\{e^{ix}|\:x\in\:]a,b[\}
$$
with $a,b\in\mathbb{R}$ such that $0< b-a < 2\pi$, equipped with the \textit{standard} partial order 
$$e^{ix}\leq_{a,b} e^{iy}\quad\text{ if }\quad a<x<y<b\quad.$$

We observe that if we have $a',b'$ such that $\arcc{ab}=\arcc{a'b'}$, then their standard orders match. So we denote by $\leq_\alpha$ the standard partial order on a proper open arc $\alpha$.
\end{defi}

One readily checks that the proper open arcs with their standard partial order form a strict ordered basis. The resulting locally ordered space is the \textit{directed (unit) circle}, we denote it by $\overrightarrow{S^1}$. The \textit{unordered (unit) circle} is obtained the same way, replacing the standard partial orders on open proper arcs by the discrete ones.\\

The counter-examples we are about to describe are based on  products of locally ordered spaces of the following form (with $X$ denoting any locally ordered space)
$$\dcircle\times X$$

\noindent
We write $p_2:S^1\times X\to X$ for the second projection, and 
$$i_t:S^1\to S^1\times X,\quad s\mapsto (s,t)\qquad(t\in X)$$ for the \textit{section} at the level $t$.\\

    \begin{figure}[!h]
    \caption{Cylinder with $X=[0,1]$}
    \begin{center}
    \hspace{-3cm}
    \includegraphics{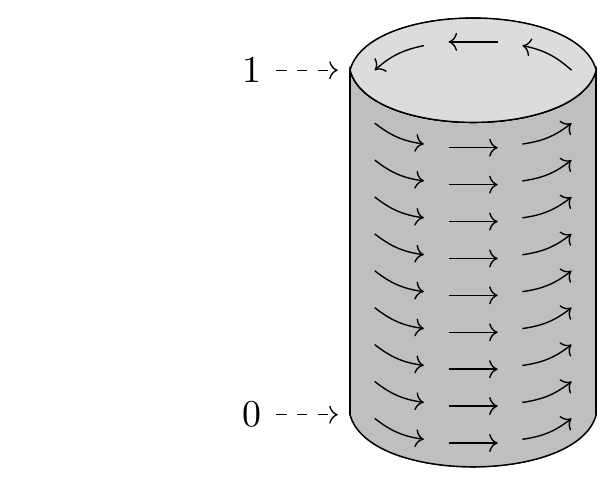}
    \end{center}
    \end{figure}

We fix a point $\ast$ of $X$ and give a criterion on the lattice of neighbourhoods of $\ast$ for the coequalizer of the pair 
$(i_\ast,c_\ast):S^1 \to \overrightarrow{S^1}\times X$ (with $c_\ast:=s\mapsto (1,\ast)$) to exist. 

\noindent
Let $f:\overrightarrow{S^1}\times X\to Y$ be a locally increasing map and $K(f)$ be the set 
\begin{equation}\label{eqn:Kf}
\{t\in X|\: \forall s,s'\in S^1, f(s,t)=f(s',t)\}\ .
\end{equation}

\begin{lem}\label{lemOuvert1}
The set $K(f)$ is an open subset of $X$.
\end{lem}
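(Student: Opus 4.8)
The plan is to show that every $t_0\in K(f)$ is interior to $K(f)$. Fix such a $t_0$ and write $y_0$ for the common value $f(s,t_0)$, which is independent of $s\in S^1$ since $t_0\in K(f)$. The key preliminary move is to pin down a \emph{single} open poset of $Y$ in which all comparisons will take place: choosing an ordered basis $\obase{'}$ of $Y$, I pick $B_0'\in\obase{'}$ with $y_0\in B_0'$. Because $f$ is locally increasing at each $(s,t_0)$ and $f(s,t_0)=y_0\in B_0'$, Definition \ref{def:morphismLPO}, applied with the \emph{fixed} target member $B_0'$, provides for every $s\in S^1$ a basic open poset $\alpha_s\times B_s$ of $\dcircle\times X$ (with $\alpha_s$ a proper open arc and $B_s$ taken in some ordered basis of $X$) containing $(s,t_0)$, such that $f(\alpha_s\times B_s)\subset B_0'$ and $f$ is increasing on it into $B_0'$. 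Insisting on the same $B_0'$ for all $s$ is what will later legitimise the use of antisymmetry, since all relevant values will then lie in one partial order.

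Next I would invoke compactness. The family $\{\alpha_s\times B_s\}_{s\in S^1}$ covers the compact set $S^1\times\{t_0\}$, so finitely many members $\alpha_1\times B_1,\dots,\alpha_n\times B_n$ already do; in particular the proper open arcs $\alpha_1,\dots,\alpha_n$ cover $S^1$. I then set $U:=\bigcap_{j=1}^n B_j$, an open neighbourhood of $t_0$ in $X$, and claim $U\subset K(f)$. Indeed, fix $t\in U$ and $s\in S^1$, and choose $j$ with $s\in\alpha_j$; since $t\in U\subset B_j$, the point $(s,t)$ lies in $\alpha_j\times B_j$, whence $f(s,t)\in B_0'$. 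Thus for a fixed $t\in U$ all values $f(s,t)$, $s\in S^1$, lie in the single poset $B_0'$. Moreover, whenever $s\leq_{\alpha_j}s'$ in a common arc $\alpha_j$, the product order gives $(s,t)\leq(s',t)$ in $\alpha_j\times B_j$ (using $t\leq_{B_j}t$), so $f(s,t)\leq_{B_0'}f(s',t)$: on each arc, $f(\cdot,t)$ is increasing for $\leq_{B_0'}$.

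It remains to turn local monotonicity on the arcs into global constancy, and this loop argument is the step I expect to be the crux. The standard order on a proper open arc is total, so any two points of one arc are comparable; combined with the fact that every arc carries the \emph{same} orientation (the standard order always runs along increasing angle), the finite cover lets me build, starting from any point, a cyclic sequence $p_0,p_1,\dots,p_m=p_0$ running once around $S^1$ with increasing angle, each consecutive pair lying in a common arc with $p_k\leq p_{k+1}$ there. Chaining the inequalities yields $f(p_0,t)\leq_{B_0'}f(p_1,t)\leq_{B_0'}\cdots\leq_{B_0'}f(p_m,t)=f(p_0,t)$; since all terms lie in the poset $B_0'$, antisymmetry forces them to be equal, and the same run-around-the-loop trick shows that any two points of a common arc, hence (after passing through overlapping arcs) any two points of $S^1$ at all, receive the same value. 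Therefore $f(\cdot,t)$ is constant, i.e.\ $t\in K(f)$, giving $U\subset K(f)$ and the openness of $K(f)$. The only delicate points to write out carefully are the existence of the cyclic increasing chain extracted from the finite arc-cover (a routine but slightly fiddly compactness argument on $S^1$), and the insistence on the single $B_0'$, without which the chained $\leq$'s would live in different posets and antisymmetry could not be applied.
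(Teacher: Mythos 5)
Your proposal is correct and follows essentially the same route as the paper's own proof: fix a single open poset of $Y$ containing the common value at level $t_0$, use local increasingness and compactness of $S^1$ to get a finite cover by arcs with a common $X$-factor neighbourhood, then chain monotone comparisons around the circle and conclude by transitivity and antisymmetry in that single poset. The only cosmetic difference is that you close one cyclic chain around the loop, where the paper chains from $x$ to $y$ and then from $y$ back to $x$ before applying antisymmetry.
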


\begin{proof}
Let $t_0\in K(f)$ and let $U\in\overrightarrow{\mathcal{O}}(Y)$ such that $f(1,t_0)\in U$.

For all $s\in S^1$, $f$ is locally increasing at $(s,t_0)$, so, by the Definition \ref{def:morphismLPO}, there exists 
an ordered arc $\alpha_s$ containing $s$, an open poset $O_s$ of $X$ containing $t_0$ such that $f(\alpha_s\times O_s)\subset U$ and the restriction $f_s:\alpha_s\times O_s\to U$ is increasing.
Since $S^1$ is compact, the open covering made of the proper open arcs of the form $\alpha_s$ admits a finite subcovering  
$$
{\mathcal F}\quad=\quad
\big\{
\alpha_s
\ \big|\ %
s\in J
\big\}\quad.
$$
We denote by $O$ the finite intersection 
$$
\bigcap_{s\in J} O_s\ ,
$$
which is thus an open neighbourhood of $t_0$. 
We are to show that $O\subset K(f)$. Let $t\in O$, $x, y\in S^1$. There exists a finite sequence $r_0,\ldots,r_n\in S^1$ such that 
$r_0=x$, $r_n=y$, and for all $k\in \{1,\ldots,n\}$  
there is $s_k\in J$ such that $r_k,r_{k+1}\in\alpha_{s_k}$ and $r_k$ is less than $r_{k+1}$ in $\alpha_{s_k}$. In particular $(r_k,t)$ is less than $(r_{k+1},t)$ in the product poset $\alpha_{s_k}\times O_{s_k}$  
    from which we deduce that $$f(r_k,t)\leq_{U}f(r_{k+1},t)$$ because the restriction $f_{s_k}$ is increasing. By transitivity of $\leq_U$, we have $f(x,t)\leq_U f(y,t)$. 
  By swapping the roles of $x$ and $y$ in the previous reasoning we prove that $f(y,t)\leq_U f(x,t)$. From the anti-symmetry of $\leq_U$, we deduce that $f(x,t)=f(y,t)$, so $t$ belongs to $K(f)$, which is therefore open in $X$.
\end{proof}
\begin{rmq}
\label{rmq:Kf_open}
Lemma \ref{lemOuvert1} remains valid if one replaces the directed 
circle by a compact locally ordered space that is \emph{strongly 
connected} in the sense that for every ordered pair of points $(a,b)$ there is a directed path from $a$ to $b$.
\end{rmq}
By the above Lemma, if $f\circ i_\ast=f\circ c_\ast$, then $K(f)$ is an open neighbourhood of $\ast$ in $X$. 

We will see that if $f$ collapses the section at level $\ast$ to the point $\ast$, then the collapsing spreads around the sections whose level are close to $\ast$.\\

Now, for every open neighbourhood $O$ of $\ast$, we construct a locally ordered space $X_O$ and a locally increasing map $q_O:\overrightarrow{S^1}\times X\to X_O$ such that $K(q_O)=O$.

We define the set 
$$
X_O\quad:=\quad
O
\quad\sqcup\quad 
\{(s,t)\:\big|\: s\in S_1,\ t\in X/O\}
$$
and the (set theoretic) map 
$q_O : \overrightarrow{S^1}\times X\to X_O$ by 
\begin{equation}
\label{eqn:q_O}
q_O(s,t)=\left\lbrace 
\begin{array}{ll}
t & \text{if } t\in O \\[+2mm] 
(s,t) & \text{if } t\in X/O \ .
\end{array} \right.
\end{equation}

We note that $K(q_O)=O$. 

\begin{lem}
The final topology of $q_O$ is generated by 
the subsets of the form 
$$
U_{\alpha,A}\quad:=\quad
O\cap A
\quad
\sqcup
\quad
\{\ (s,t)\ \big|\:s\in\alpha,\ t\in (X/O)\cap A \}
$$
with 
$\alpha$ proper open arc, and $A\in\overrightarrow{\mathcal{O}}(X)$. 
\end{lem}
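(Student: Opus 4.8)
The plan is to use the defining property of the final topology---a subset $V\subset X_O$ is open precisely when $q_O^{-1}(V)$ is open in the underlying space of $\dcircle\times X$---and to establish the two inclusions making $\{U_{\alpha,A}\}$ a basis: each $U_{\alpha,A}$ is final-open, and each final-open $V$ is a union of such sets. For the first inclusion I would compute the preimage and check openness directly. Splitting according to whether the $X$-coordinate lies in $O$ yields $q_O^{-1}(U_{\alpha,A})=\big(S^1\times(O\cap A)\big)\cup\big(\alpha\times((X/O)\cap A)\big)$, and openness is verified pointwise: around a point whose second coordinate lies in $O\cap A$ the open box $S^1\times(O\cap A)$ works, while around a point $(s_0,t_0)$ with $t_0\in(X/O)\cap A$ and $s_0\in\alpha$ the product $\alpha\times A$ does the job, since any point of $\alpha\times A$ whose second coordinate slips into $O$ lands in the first piece $S^1\times(O\cap A)$.

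For the converse, I fix a final-open $V$ and a point $p\in V$; as $q_O$ is surjective it suffices to produce $U_{\alpha,A}$ with $p\in U_{\alpha,A}$ and $q_O^{-1}(U_{\alpha,A})\subset q_O^{-1}(V)$, and I distinguish two cases by the shape of the fibre of $p$. If $p=t_0\in O$, its fibre $q_O^{-1}(t_0)=S^1\times\{t_0\}$ is an entire circle inside the open set $q_O^{-1}(V)$; compactness of $S^1$ and the tube lemma give an open $W\ni t_0$ with $S^1\times W\subset q_O^{-1}(V)$, and refining $W\cap O$ to an open poset $A\in\pyccmd{X}$ with $t_0\in A\subset O$ gives $U_{\alpha,A}=A\subset V$, the arc $\alpha$ being irrelevant because $(X/O)\cap A=\emptyset$.

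The delicate case is $p=(s_0,t_0)$ with $t_0\in X/O$, whose fibre is the single point $(s_0,t_0)$. Here I choose a basic product neighbourhood $\alpha\times A\subset q_O^{-1}(V)$ with $s_0\in\alpha$ a proper open arc and $t_0\in A\in\pyccmd{X}$, and claim $U_{\alpha,A}\subset V$. The part of $q_O^{-1}(U_{\alpha,A})$ over $X/O$ is contained in $\alpha\times A$, hence in $q_O^{-1}(V)$; the real obstacle is the part over $O$, because $q_O^{-1}(U_{\alpha,A})$ contains the \emph{whole} cylinder $S^1\times(O\cap A)$ whereas the chosen neighbourhood only controls the arc $\alpha$. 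The observation that resolves this is that points over $O$ are totally collapsed: for each $t\in O\cap A$ a single witness $s\in\alpha$ already gives $q_O(s,t)=t\in V$, and since $V$ is final-open the full collapsed fibre $q_O^{-1}(t)=S^1\times\{t\}$ must lie in $q_O^{-1}(V)$; taking the union over $t\in O\cap A$ yields $S^1\times(O\cap A)\subset q_O^{-1}(V)$. Hence $q_O^{-1}(U_{\alpha,A})\subset q_O^{-1}(V)$ and $p\in U_{\alpha,A}\subset V$. The crux of the whole proof is exactly this interplay in the second case between the total collapsing over $O$ and the openness of $V$, which---unlike the first case---requires no compactness at all.
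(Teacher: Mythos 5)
Your proof is correct and takes essentially the same route as the paper: both directions reduce to preimage computations, and your ``delicate case'' argument---that total collapsing over $O$ forces $S^1\times(O\cap A)$ into $q_O^{-1}(V)$---is precisely the verification the paper compresses into ``We verify that $U_{\alpha,A}\subset B$.'' One remark: your Case 1 does not need the tube lemma, since the Case 2 collapsing argument works verbatim when $p=t_0\in O$ (pick any preimage point $(s,t_0)$, any basic box $\alpha\times A\subset q_O^{-1}(V)$ around it, and conclude $U_{\alpha,A}\subset V$ the same way), which is why the paper handles both cases uniformly and uses no compactness at all.
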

\begin{proof}
Indeed, those subsets are open in the final topology because $q^{-1}_O(U_{\alpha,A})=S^1\times (O\cap A)\:\cup\: \alpha\times A$ is an open subset of $S^1\times X$. Conversely, let $B$ be a subset of $X_O$ such that $q^{-1}_O(B)$ is an open subset of $S^1\times X$ and $x\in B$. Since $q_O$ is a surjection, there exists $(s,t)\in S^1\times X$ such that $q_O(s,t)=x$. Then $q^{-1}_O(B)$ is an open neighbourhood of $(s,t)$ in $S^1\times X$ so there are a proper open arc $\alpha$ and an open poset $A$ of $X$ such that $(s,t)\in\alpha\times A\subset q^{-1}_O(B)$. We verify that $U_{\alpha,A}\subset B$.
\end{proof}
An element $u\in U_{\alpha,A}$ is either an element of $O\cap A$ or an ordered pair $(s,t)\in\alpha\times ((X/O)\cap A)$. Depending on the case, the \emph{second component} of $u$ refers to $u$ itself or to $t$, we denote it by $p_2(u)$.
We provide every set $U_{\alpha,A}$ with the image of the partial order $\leq_{\alpha\times A}$ under the mapping $q_O$, which we denote by $\leq^1_{\alpha,A}$.
\begin{lem}
The relation $\leq^1_{\alpha,A}$ matches with the relation $\sqsubseteq$ defined below:
$$
u\sqsubseteq u'
\quad\text{if}\quad
p_2(u)\leq_A p_2(u')
\quad\text{and}\quad
\left\{
\begin{array}{l}
\{u,u'\}\cap O\not=\emptyset\\[+2mm] \text{or} \\[+2mm]
u=(s,t),\ u'=(s',t'),\ \text{and}\ s\leq_\alpha s'.
\end{array}
\right.
$$
\end{lem}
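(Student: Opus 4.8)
The plan is to unfold the definition of the image relation $\leq^1_{\alpha,A}$ and match it to $\sqsubseteq$ by a case analysis governed by whether the second components of the points lie in $O$ or in $X/O$. Recall that $u\leq^1_{\alpha,A}u'$ means there exist $(s,t),(s',t')\in\alpha\times A$ with $q_O(s,t)=u$, $q_O(s',t')=u'$ and $(s,t)\leq_{\alpha\times A}(s',t')$, that is $s\leq_\alpha s'$ and $t\leq_A t'$. Two structural observations drive the whole argument. First, $p_2(q_O(s,t))=t$ in every case, so the second component of a point equals the $A$-coordinate of any of its preimages; this makes the clause $p_2(u)\leq_A p_2(u')$ automatic once witnesses are available. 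Second, the fibres of $q_O$ restricted to $\alpha\times A$ are the singletons $\{(s,t)\}$ when $t\in(X/O)\cap A$, and the full collapsed arcs $\alpha\times\{t\}$ when $t\in O\cap A$.

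For the inclusion $\sqsubseteq\ \subseteq\ \leq^1_{\alpha,A}$ I would treat the two alternatives of the definition of $\sqsubseteq$. In the alternative where both $u=(s,t)$ and $u'=(s',t')$ lie outside $O$, the pairs $(s,t)$ and $(s',t')$ are themselves preimages, and the hypotheses $s\leq_\alpha s'$ and $p_2(u)=t\leq_A t'=p_2(u')$ give exactly $(s,t)\leq_{\alpha\times A}(s',t')$. In the alternative $\{u,u'\}\cap O\neq\emptyset$, at least one of the two points has a full arc $\alpha\times\{\cdot\}$ as its fibre; I would use this freedom to choose the arc coordinate of that point equal to the arc coordinate of the other, so that the arc-order constraint holds by reflexivity of $\leq_\alpha$, while $p_2(u)\leq_A p_2(u')$ supplies the $A$-order constraint. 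This produces witnesses realizing $u\leq^1_{\alpha,A}u'$ in each sub-case (both in $O$; exactly one in $O$).

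For the converse $\leq^1_{\alpha,A}\ \subseteq\ \sqsubseteq$ I would start from witnesses $(s,t)\leq_{\alpha\times A}(s',t')$ projecting onto $u$ and $u'$. The first observation immediately yields $p_2(u)=t\leq_A t'=p_2(u')$, so the common clause of $\sqsubseteq$ holds. It then remains to produce the correct right-hand alternative: if either $u$ or $u'$ lies in $O$ we are in the first alternative and nothing more is required, whereas if both lie outside $O$ then $q_O$ is injective on their fibres, the witnesses are forced to equal $u=(s,t)$ and $u'=(s',t')$, and the relation $s\leq_\alpha s'$ carried by these witnesses is precisely the second alternative.

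The argument is essentially bookkeeping; the only point requiring genuine care — and the closest thing to an obstacle — is the asymmetry between the collapsed fibres over $O\cap A$ and the singleton fibres over $(X/O)\cap A$. In the forward direction one must exploit the collapsed fibres to gain the freedom needed to satisfy the arc-order constraint, while in the backward direction one must notice that this freedom disappears exactly when both points lie outside $O$, which is what forces the arc condition $s\leq_\alpha s'$ to reappear in $\sqsubseteq$. I would also record, using reflexivity of $\leq_\alpha$ and of $\leq_A$, that the diagonal case $u=u'$ is subsumed, so that the two relations agree there as well.
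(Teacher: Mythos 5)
Your proposal is correct and follows essentially the same route as the paper: both directions are handled by unfolding the image relation $\leq^1_{\alpha,A}$ and doing a case analysis on membership in $O$, with the collapsed fibres over $O\cap A$ exploited (via reflexivity of $\leq_\alpha$) to choose witnesses whose arc coordinates agree --- exactly the witnesses $(s_0,t)$, $(s',t)$, $(s,t')$ used in the paper's four cases. Your treatment of the converse direction is in fact slightly more explicit than the paper's, which dismisses it as following readily from the definition of $q_O$.
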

\begin{proof}
Let $u$ and $u'$ be elements of $U_{\alpha,A}$ such that $u\sqsubseteq u'$.

\begin{itemize}
	\item If $u=t\in O$ and $u'=t'\in O$, let $s_0\in\alpha$, we have $(s_0,t)\leq_{\alpha\times A}(s_0,t')$, $q_O(s_0,t)=u$ and $q_O(s_0,t')=u'$, hence $u\leq^1_{\alpha,A}u'$. 
	\item If $u=t\in O$ and $u'=(s',t')\in \alpha\times ((X/O)\cap A)$, we have $(s',t)\leq_{\alpha\times A}(s',t')$, $q_O(s',t)=u$ and $q_O(s',t')=u'$, hence $u\leq^1_{\alpha,A}u'$. 
	\item If $u=(s,t)\in \alpha\times ((X/O)\cap A)$ and $u'=t'\in O$, we have $(s,t)\leq_{\alpha\times A}(s,t')$, $q_O(s,t)=u$ and $q_O(s,t')=u'$, hence $u\leq^1_{\alpha,A}u'$. 
	\item If $u=(s,t)\in \alpha\times ((X/O)\cap A)$ and $u'=(s',t')\in \alpha\times ((X/O)\cap A)$, we have $(s,t)\leq_{\alpha\times A}(s',t')$, $q_O(s,t)=u$ and $q_O(s',t')=u'$, hence $u\leq^1_{\alpha,A}u'$. 
\end{itemize}
The fact that $u\leq^1_{\alpha,A}u'$ implies $u\sqsubseteq u'$ readily derives from the definition of $q_O$.
\end{proof}
\begin{lem}
The transitive closure of the relation $\leq^1_{\alpha,A}$, which we denote by $\leq_{\alpha,A}$, is antisymmetric. Moreover we have 

$u\leq_{\alpha,A}u'$ if and only if

$$ u\sqsubseteq u'\text{\quad or \quad} \exists\: u''\in O\cap A \text{\quad such that \quad} u\sqsubseteq u''\sqsubseteq u'\:.$$
We write $u \trianglelefteq  u'$ when the above condition is satisfied.
\end{lem}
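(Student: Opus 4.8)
The plan is to pin down the transitive closure explicitly and then read antisymmetry off the resulting description; I abbreviate the relation defined on the right-hand side by $\trianglelefteq$, so the two assertions are that $\leq_{\alpha,A}$ coincides with $\trianglelefteq$ and that it is antisymmetric. The inclusion $\trianglelefteq\,\subseteq\,\leq_{\alpha,A}$ is immediate, since $u\sqsubseteq u'$ is a chain of length one and $u\sqsubseteq u''\sqsubseteq u'$ a chain of length two. For the reverse inclusion the main tool is an \emph{absorption} remark about the points of $O\cap A$: if $m\in O\cap A$ then $m\sqsubseteq v\sqsubseteq w$ forces $m\sqsubseteq w$, and symmetrically $u\sqsubseteq v\sqsubseteq m$ forces $u\sqsubseteq m$. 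Both follow from transitivity of $\leq_A$ on second components together with the observation that the clause ``$\{\cdot,\cdot\}\cap O\neq\emptyset$'' in the definition of $\sqsubseteq$ is met as soon as one endpoint lies in $O$.

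I would then prove, by induction on the length $n$ of a chain $u_0\sqsubseteq\cdots\sqsubseteq u_n$, that $u_0\trianglelefteq u_n$; this yields $\leq_{\alpha,A}\,\subseteq\,\trianglelefteq$ and hence the equality. The inductive step is a single composition lemma: if $u\trianglelefteq v$ and $v\sqsubseteq w$, then $u\trianglelefteq w$. When $u\trianglelefteq v$ already passes through some $m\in O\cap A$, the right-hand absorption remark collapses $m\sqsubseteq v\sqsubseteq w$ into $m\sqsubseteq w$ and the path through $m$ is preserved. When instead $u\sqsubseteq v$ directly, I split on the nature of $v$: if $v\in O\cap A$ then $u\sqsubseteq v\sqsubseteq w$ is already of the through-$O$ form; if $v$ is a pair $(s,t)$, I combine the $\alpha$- and $A$-orders of the two steps and verify, according to whether $u$ and $w$ lie in $O$, that $u\sqsubseteq w$ holds directly. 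This case distinction is where I expect the real effort, and it is exactly what explains the shape of the statement: concatenating two direct steps $u\sqsubseteq v\sqsubseteq w$ through a point $v\in O$ need \emph{not} give a direct relation $u\sqsubseteq w$ when $u$ and $w$ are pairs, because their $\alpha$-coordinates may be incomparable --- which is precisely why the ``or through $O\cap A$'' alternative is needed.

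Antisymmetry then follows cleanly from the characterization. First I note that $u\trianglelefteq u'$ implies $p_2(u)\leq_A p_2(u')$, since every defining clause of $\sqsubseteq$ carries the second components and $\leq_A$ is transitive; hence $u\leq_{\alpha,A}u'$ together with $u'\leq_{\alpha,A}u$ forces $p_2(u)=p_2(u')$ by antisymmetry of $\leq_A$. As $O$ and $X/O$ are disjoint, equal second components force $u$ and $u'$ to be of the same kind. If both lie in $O\cap A$ they equal their common second component, so $u=u'$. If both are pairs with common second component $t\notin O$, a through-$O$ form is impossible --- it would force the intermediate point $m\in O\cap A$ to satisfy $p_2(m)=t\notin O$ --- so both relations are direct, giving $s\leq_\alpha s'$ and $s'\leq_\alpha s$, and antisymmetry of $\leq_\alpha$ yields $s=s'$, whence $u=u'$.
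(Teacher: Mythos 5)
Your proof is correct and follows essentially the same route as the paper's: the same absorption observation about points of $O\cap A$, the same case analysis on membership in $O$ to show $\trianglelefteq$ is stable under composing with a further $\sqsubseteq$ step, and the same second-component argument (antisymmetry of $\leq_A$, then of $\leq_\alpha$) for antisymmetry. The only organizational difference is that the paper proves antisymmetry first, directly on chains, and establishes full transitivity of $\trianglelefteq$ rather than your one-step composition lemma plus induction on chain length, whereas you derive antisymmetry from the already-proven characterization --- a marginally cleaner arrangement of the same ideas.
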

\begin{proof}
We check that the relation $\leq_{\alpha,A}$ is antisymmetric. Indeed, if $u\leq_{\alpha,A}u'\leq_{\alpha,A}u$ then we have $p_2(u)=p_2(u')$. It follows that $u$, $u'$, and also any $u''$ such that $u\leq_{\alpha,A}u''\leq_{\alpha,A}u'$ all belong to $O$ or to its complement. The first case is obvious, in the second one we have $s\leq_\alpha s'\leq_\alpha s$, from which we deduce that $s=s'$. 
The relation $\unlhd$ is indeed an extension of $\leq^1_{\alpha,A}$ because we can take $u''\in\{u,u'\}$ and is clearly included in $\leq_{\alpha,A}$. 
In order to prove that $\unlhd$ is transitive, we first make an observation about the relation $\sqsubseteq$\::
assume that $u_0\sqsubseteq u_1\sqsubseteq u\sqsubseteq  u_2\sqsubseteq u_3$ with $u\in O\cap A$. We have $p_2(u_0)\leq_A p_2(u)\leq_A p_2(u_3)$, and then $u_0\sqsubseteq u \sqsubseteq u_3$. 
We now check that $\unlhd$ is transitive:
assume that $u_0\unlhd u_1\unlhd u_2$. If $u_0=(s_0,t_0)$, $u_1=(s_1,t_1)$ and $u_2=(s_2,t_2)$, and if $u_0\sqsubseteq u_1\sqsubseteq u_2$ then, by transitivity of $\leq_\alpha$ and of $\leq_A$, we have $u_0\sqsubseteq u_2$. Otherwise, we meet one of the following cases:
\begin{enumerate}
\item one of the elements $u_0$, $u_1$, and $u_2$ belongs to $O\cap A$, or 
\item there exists $\tilde u\in O\cap A$ such that $u_0\sqsubseteq \tilde u\sqsubseteq u_1$ or $u_1\sqsubseteq \tilde u\sqsubseteq u_2$.
\end{enumerate}
In any case the observation we made about $\sqsubseteq$ allows us conclude that there exists $u\in O\cap A$ such that $u_0\sqsubseteq u \sqsubseteq u_2$, and therefore $u_0\unlhd u_2$. Finally, the relations $\sqsubseteq$ and $\leq_{\alpha,A}$ match.
\end{proof}

Consequently, if $(X/O)\cap A$ is order-convex in $(A,\leq_A)$ then $\leq_{\alpha,A}=\leq^1_{\alpha,A}$.

\begin{lem}
The family of posets $(U_{\alpha,A},\leq_{\alpha,A})$ is an ordered basis on $X_O$.
\end{lem}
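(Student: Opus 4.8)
The plan is to verify the two clauses of Definition~\ref{defi-orderedBasis} for the family $\{(U_{\alpha,A},\leq_{\alpha,A})\}$. For the first clause, the lemma computing the final topology of $q_O$ already shows that the sets $U_{\alpha,A}$ are open and generate it; since they evidently cover $X_O$, it suffices to establish the refinement property, after which they automatically form a basis of the final topology. I would prove this property together with the order condition of the second clause, by producing, for every $x\in U_{\alpha,A}\cap U_{\alpha',A'}$, a single $U_{\alpha'',A''}$ with $x\in U_{\alpha'',A''}\subset U_{\alpha,A}\cap U_{\alpha',A'}$ and $\leq_{\alpha'',A''}\,\subset_{lax}\,\leq_{\alpha,A},\ \leq_{\alpha',A'}$.

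First I would compute
$$
U_{\alpha,A}\cap U_{\alpha',A'}
\;=\;
\big(O\cap A\cap A'\big)
\;\sqcup\;
\big\{\,(s,t)\ \big|\ s\in\alpha\cap\alpha',\ t\in(X/O)\cap A\cap A'\,\big\},
$$
and then choose $\alpha''$ and $A''$ according to the type of $x$. If $x=(s_0,t_0)$ lies in the second summand, I take a proper open arc $\alpha''\subset_{str}\alpha,\alpha'$ with $s_0\in\alpha''$ (the arcs form a strict ordered basis) and, using that $\pyccmd{X}$ is an ordered basis, an open poset $A''$ with $t_0\in A''\subset A\cap A'$ and $A''\subset_{lax}A,A'$. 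If instead $x\in O\cap A\cap A'$, I first pick $A_1\in\pyccmd{X}$ with $x\in A_1\subset A\cap A'$ and $A_1\subset_{lax}A,A'$, then set $A'':=A_1\cap O$ with the order restricted from $A_1$; by Lemma~\ref{lem:1} this is an open poset, whence $A''\subset_{str}A_1$ and thus $A''\subset_{lax}A,A'$, while $A''\subset O$ empties the second summand of $U_{\alpha'',A''}$, so that $U_{\alpha'',A''}=O\cap A''\subset O\cap A\cap A'$ for an arbitrary arc $\alpha''$. In both cases one checks at once that $x\in U_{\alpha'',A''}$ and $U_{\alpha'',A''}\subset U_{\alpha,A}\cap U_{\alpha',A'}$, which settles the topological clause.

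It remains to check $\leq_{\alpha'',A''}\,\subset_{lax}\,\leq_{\alpha,A}$, the case of $(\alpha',A')$ being symmetric. The inclusion of underlying sets $U_{\alpha'',A''}\subset U_{\alpha,A}$ follows from $\alpha''\subset\alpha$ and $A''\subset A$. For the orders, I would first note that the relation $\sqsubseteq$ attached to $(\alpha'',A'')$ is contained in the one attached to $(\alpha,A)$: its defining clauses only involve $\leq_{A''}$ and $\leq_{\alpha''}$, which $A''\subset_{lax}A$ and $\alpha''\subset_{str}\alpha$ convert into $\leq_A$ and $\leq_\alpha$ (in the second case above no pair $(s,t)$ occurs, so the arc plays no role). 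Invoking the preceding lemma, which identifies each $\leq_{\alpha,A}$ with the relation $\trianglelefteq$ described there, I conclude: if $u\leq_{\alpha'',A''}u'$, then either $u\sqsubseteq u'$ for $(\alpha'',A'')$, hence for $(\alpha,A)$, giving $u\leq_{\alpha,A}u'$; or there is $u''\in O\cap A''$ with $u\sqsubseteq u''\sqsubseteq u'$ for $(\alpha'',A'')$, and since $O\cap A''\subset O\cap A$ this same $u''$ witnesses $u\trianglelefteq u'$, i.e. $u\leq_{\alpha,A}u'$.

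I expect the only delicate point to be the bookkeeping in the intersection, specifically the subcase $x\in O$ where $\alpha\cap\alpha'$ may be empty and the second component of the refining element must therefore be suppressed; this is exactly what contracting $A''$ into $O$ through Lemma~\ref{lem:1} accomplishes. The remainder is the routine transport of the explicit descriptions of $\sqsubseteq$ and $\leq_{\alpha,A}$ along $A''\subset_{lax}A$ and $\alpha''\subset_{str}\alpha$.
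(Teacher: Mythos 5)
Your proof is correct and follows essentially the same route as the paper's: the same case split (point in $O$ versus point of the form $(s,t)$ with $t\notin O$), the same trick of contracting the refining open poset into $O$ so that the arc component becomes irrelevant, and the same shrinking of both the arc and the open poset in the other case. The only difference is that you verify the $\subset_{lax}$ conditions explicitly via the characterization $\trianglelefteq$ of $\leq_{\alpha,A}$, where the paper merely asserts them.
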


\begin{proof}
Let $u\in X_O$ such that $u\in U_{\alpha_0, A_0}\cap U_{\alpha_1, A_1}$ with $\alpha_0,\alpha_1$ proper open arcs and $A_0,A_1\in\pyccmd{X}$. If $u\in O$, there exists $A_2\in\pyccmd{X}$ such that $A_2\subset O$ and $u\in A_2\subset_{lax} A_0,\:A_1$. Then $U_{\alpha_0,A_2}$ is such that $u\in U_{\alpha_0,A_2}\subset_{lax} U_{\alpha_0,A_0},\: U_{\alpha_1,A_1}$. We note that if $A_2\subset_{str} A_0,\:A_1$, then $U_{\alpha_0,A_2}\subset_{str} U_{\alpha_0,A_0},\: U_{\alpha_1,A_1}$. If $u=(s,t)\in S^1\times (X/O)$, there exists a proper open arc $\alpha_2$ and $A_2\in\pyccmd{X}$ such that $s\in\alpha_2 \subset\alpha_0\cap\alpha_1$, and $t\in A_2\subset_{lax} A_0,\:A_1$. Then $U_{\alpha_2,A_2}$ is such that $u\in U_{\alpha_2,A_2}\subset_{lax} U_{\alpha_0,A_0},\: U_{\alpha_1,A_1}$. We note that if $A_2\subset_{str} A_0,\:A_1$ and if the subsets $(X/O)\cap A_i$ are order-convex in the posets $(A_i,\leq_{A_i})$, then $U_{\alpha_2,A_2}\subset_{str} U_{\alpha_0,A_0},\: U_{\alpha_1,A_1}$. Finally, $X_O$ is a locally ordered space. 
\end{proof}

\begin{rmq}\label{rmq:XO strict condtion}
Moreover, if there is a \emph{strict} ordered basis $\obase{}$ such that for all $B\in\obase{}$, $(X/O)\cap B$ is order-convex in $B$, then $X_O$ is a \emph{strictly} locally ordered space.
In particular, if $X$ a strictly locally ordered space coming from a topological space (\ref{rmq:topToLocOrd}), then the canonical ordered basis satisfies the latter order-convexity condition.
Besides, when $O$ is a clopen subset, any strict basis $\obase{}$ can be turned into a strict basis satisfying the order-convexity condition by keeping only those elements $B\in\obase{}$ such that $B\subset O$ or $B\subset X/O$. 
\end{rmq} 

\begin{lem}
\label{lem:qO_morphism}
The map $q_O$ is locally increasing.
\end{lem}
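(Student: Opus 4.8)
The plan is to verify directly the condition of Definition \ref{def:morphismLPO}, since the target $X_O$ need not be strict. As an ordered basis of the domain $\dcircle\times X$ I use the products $\alpha\times A$ with $\alpha$ a proper open arc and $A\in\pyccmd{X}$ (Remark \ref{rmq:lpo_prod}), and as an ordered basis of $X_O$ the family $(U_{\alpha,A},\leq_{\alpha,A})$ just constructed. By the previous lemma, the relation $\leq_{\alpha,A}$ coincides with $\trianglelefteq$, so checking that a restriction of $q_O$ is increasing amounts to producing, for each comparable pair in the source product poset, a witness of the relation $\sqsubseteq$ (possibly relayed by a point of $O\cap A$) between the images. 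I would fix a point $(s_0,t_0)$ and a basis element $U_{\alpha,A}$ containing $q_O(s_0,t_0)$, then find a product neighbourhood of $(s_0,t_0)$ whose image lies in $U_{\alpha,A}$ and on which $q_O$ is increasing, distinguishing two cases according to whether $t_0\in O$ or $t_0\in X/O$.

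If $t_0\in X/O$, then $q_O(s_0,t_0)=(s_0,t_0)$, and its membership in $U_{\alpha,A}$ forces $s_0\in\alpha$ and $t_0\in(X/O)\cap A$. Here I would take the neighbourhood $\alpha\times A$ itself. Containment $q_O(\alpha\times A)\subset U_{\alpha,A}$ is immediate from the definition of $q_O$: a pair $(s,t)$ with $t\in O$ maps to $t\in O\cap A$, while a pair with $t\in X/O$ maps to $(s,t)$ with $s\in\alpha$ and $t\in(X/O)\cap A$. For monotonicity, given $(s,t)\leq_{\alpha\times A}(s',t')$, that is $s\leq_\alpha s'$ and $t\leq_A t'$, one inspects the four cases according to the sides of $O$ on which $t$ and $t'$ lie; in each case the hypotheses $t\leq_A t'$ and, when both points fall in $X/O$, $s\leq_\alpha s'$, are exactly what the definition of $\sqsubseteq$ requires, so $q_O(s,t)\sqsubseteq q_O(s',t')$ and a fortiori $q_O(s,t)\leq_{\alpha,A}q_O(s',t')$.

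The case $t_0\in O$ is the subtle one, and is where the main obstacle lies. Now $q_O(s_0,t_0)=t_0$, and membership in $U_{\alpha,A}$ only says $t_0\in O\cap A$, placing no constraint on $s_0$; in particular $s_0$ may fail to belong to $\alpha$. Consequently the naive neighbourhood $\alpha\times A$ is unavailable (it need not contain $(s_0,t_0)$), and an arbitrary product neighbourhood $\beta\times A$ with $s_0\in\beta$ is dangerous, because it contains pairs $(s,t)$ with $t\in X/O$ whose image $(s,t)$ requires $s\in\alpha$, which may fail. The remedy is to shrink the second factor inside $O$: by Lemma \ref{lem:1}, the open subset $O\cap A$ of $A$ equipped with the restriction of $\leq_A$ is an open poset $C\in\pyccmd{X}$ with $t_0\in C$ and $C\subset_{str}A$. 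Choosing any proper open arc $\beta$ containing $s_0$, every pair of $\beta\times C$ has its second coordinate in $O\cap A$, so $q_O(\beta\times C)\subset O\cap A\subset U_{\alpha,A}$ regardless of $\beta$; and if $(s,t)\leq_{\beta\times C}(s',t')$ then $t,t'\in O$ and $t\leq_A t'$, whence $q_O(s,t)=t\sqsubseteq t'=q_O(s',t')$. This establishes local increase at every point, completing the proof.
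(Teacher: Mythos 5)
Your proof is correct and follows essentially the same route as the paper's: verify Definition \ref{def:morphismLPO} directly on the product basis of $\dcircle\times X$ and the basis $(U_{\alpha,A},\leq_{\alpha,A})$ of $X_O$, splitting into the two cases $t_0\in X/O$ (where $\alpha\times A$ itself works) and $t_0\in O$ (where the second factor must be shrunk inside $O$). Your treatment of the second case, invoking Lemma \ref{lem:1} to replace $A$ by $O\cap A$ and pairing it with an arbitrary arc $\beta\ni s_0$, is just a more explicit rendering of the paper's terser ``we can assume that $A\subset O$'' together with its choice of an arbitrary arc $\alpha'$ containing $s$.
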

\begin{proof}
Let $x=(s,t)\in \overrightarrow{S^1}\times X$ and let $U_{\alpha,A}$ such that $q_O(x)\in U_{\alpha,A}$, with $\alpha$ proper open arc, and $A\in\overrightarrow{\mathcal{O}}(X)$. If $q_O(x)\in O$, we can assume that $A\subset O$. Let $\alpha'$ be a proper open arc containing $s$, then $x\in \alpha'\times A\in\pyccmd{\overrightarrow{S^1}\times X}$, $q_O(\alpha'\times A)\subset U_{\alpha,A}$ and the restriction of $q_O$ to $\alpha'\times A$ is increasing from $\alpha'\times A$ to $U_{\alpha,A}$. If $q_O(x)=(s,t)\in S^1\times (X/O)$, then $x\in \alpha\times A\in\pyccmd{\overrightarrow{S^1}\times X}$, $q_O(\alpha\times A)\subset U_{\alpha,A}$ and the restriction of $q_O$ to $\alpha\times A$ is increasing from $\alpha\times A$ to $U_{\alpha,A}$.
\end{proof} 

\begin{lem}\label{lem:qOequaliser}
Any locally increasing map $f:\overrightarrow{S^1}\times X\to Y$ such that $O\subset K(f)$ factorizes through the map $q_O$ in a unique way.
\end{lem}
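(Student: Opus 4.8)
The plan is to build the factorization set-theoretically, observe that it is the only possible candidate, and then do the real work of checking it is a morphism of locally ordered spaces.

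First I would define $\bar f:X_O\to Y$ by $\bar f(t):=f(s,t)$ for $t\in O$ (with any choice of $s\in S^1$) and $\bar f(s,t):=f(s,t)$ for $t\in X/O$. The first clause is well defined \emph{precisely} because $O\subset K(f)$: by the definition of $K(f)$, the value $f(s,t)$ is independent of $s$ as soon as $t\in O$. Reading off the formula \eqref{eqn:q_O} for $q_O$ then gives $\bar f\circ q_O=f$. Since $q_O$ is surjective, any $g$ with $g\circ q_O=f$ must coincide with $\bar f$ on all of $X_O$, which settles uniqueness.

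The substantial part is to show that $\bar f$ is locally increasing, continuity being then automatic since it is already contained in Definition \ref{def:morphismLPO}. Here I would exploit the ordered basis $(U_{\alpha,A},\leq_{\alpha,A})$ of $X_O$. The key reduction is that, since $\leq^1_{\alpha,A}$ is by definition the image of $\leq_{\alpha\times A}$ under $q_O$, every relation $u\leq^1_{\alpha,A}u'$ arises from a pair $(w,r)\leq_{\alpha\times A}(w',r')$ with $q_O(w,r)=u$ and $q_O(w',r')=u'$, whence $\bar f(u)=f(w,r)$ and $\bar f(u')=f(w',r')$. Therefore, if for some basis element $B'$ of $Y$ the restriction $f_{\alpha\times A}:\alpha\times A\to B'$ is increasing, then $\bar f$ is increasing for $\leq^1_{\alpha,A}$, hence for its transitive closure $\leq_{\alpha,A}$ by transitivity of $\leq_{B'}$; moreover $\bar f(U_{\alpha,A})\subset B'$ because every element of $U_{\alpha,A}$ is the $q_O$-image of a point of $\alpha\times A$. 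Thus verifying local increasingness of $\bar f$ at $u$ reduces to producing a single arc $\alpha$ and an open poset $A$ with $u\in U_{\alpha,A}$ on which $f$ is increasing into $B'$.

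To produce them I would fix $u\in X_O$ and a basis element $B'$ of $Y$ with $\bar f(u)\in B'$, choose any $(s,t)$ with $q_O(s,t)=u$ (so $f(s,t)=\bar f(u)\in B'$), and apply the local increasingness of $f$ at $(s,t)$ relative to $B'$, taking the domain neighbourhood of product form by the basis-independence noted in Definition \ref{def:morphismLPO}; this yields an ordered arc $\alpha\ni s$ and $A\in\overrightarrow{\mathcal O}(X)$ with $t\in A$, $f(\alpha\times A)\subset B'$, and $f_{\alpha\times A}$ increasing. One then checks $u\in U_{\alpha,A}$ in the two cases $t\in O$ (so $u=t\in O\cap A$) and $t\in X/O$ (so $u=(s,t)$ with $s\in\alpha$ and $t\in(X/O)\cap A$), which closes the argument.

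The main obstacle to anticipate is exactly the point singled out in the reduction: the preimage $q_O^{-1}(U_{\alpha,A})$ equals $S^1\times(O\cap A)\cup\alpha\times A$, which is strictly larger than $\alpha\times A$, so one might fear that controlling $\bar f$ on the collapsed part $O\cap A$ forces $f$ to be increasing over the whole circle. The resolution is that $\leq_{\alpha,A}$ is generated purely through the single arc $\alpha$, so any comparison between two points of $O\cap A$ is witnessed by lifting them to a common slice $\{s_0\}\times A$ with $s_0\in\alpha$; hence increasingness of $f$ on $\alpha\times A$ alone suffices, and a single application of the local increasingness of $f$ does the job.
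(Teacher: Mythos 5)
Your proof is correct and follows essentially the same route as the paper's: define the factorization set-theoretically (well-defined because $O\subset K(f)$, unique because $q_O$ is surjective), then verify local increasingness on the basis elements $U_{\alpha,A}$ by lifting relations $\leq^1_{\alpha,A}$ through $q_O$ to pairs in $\alpha\times A$ and invoking the local increasingness of $f$ at a single preimage of $u$. The only (harmless) divergences are that the paper obtains continuity from the final topology on $X_O$ whereas you get it for free from Definition \ref{def:morphismLPO}, and that you spell out the passage from $\leq^1_{\alpha,A}$ to its transitive closure $\leq_{\alpha,A}$ via transitivity of the order on the target, a step the paper leaves implicit.
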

\begin{proof}
The map $h:X_O\to Y$ soundly defined by $q_O(s,t)=f(s,t)$ is the only one satisfying $f=h\circ q_0$. 
Since the underlying topology on $X_O$ is the final topology associated to $q_O$, the map $h$ is continuous. 
Let $u\in X_O$ and $W\in\pyccmd{Y}$ such that $h(u)\in W$. 
Let $(s,t)\in \overrightarrow{S^1}\times X$ such that $u=q_O(s,t)$. 
Since $f$ is locally increasing, we have a proper open arc $\alpha$ and an open poset $A$ of $\pyccmd{X}$ such that $(s,t)\in \alpha\times A$, $f(\alpha\times A)\subset W$ and the restriction of $f$ to $\alpha\times A$ is increasing from $\alpha\times A$ to $W$. 
Then $u\in U_{\alpha,A}$ and $h(U_{\alpha,A})=h(q_O(\alpha\times A))=f(\alpha\times A)\subset W$. 
It remains to show that the restriction of $h$ to $U_{\alpha,A}$ is an increasing map from $U_{\alpha,A}$ to $W$. 
Let $u'$ and $u''$ be elements of $U_{\alpha,A}$ such that $u'\leq^1_{\alpha,A} u''$. There exist two elements $x'$ and $x''$ of $\alpha\times A$ such that $x'\leq_{\alpha\times A}x''$, $q_O(x')=u'$ and $q_O(x'')=u''$. 
Hence $h(u')=f(x')\leq_W f(x'') = h(u'')$.
\end{proof}

\begin{corol}\label{corol:colimiteCylindre}
In the category of locally ordered spaces, there exists a coequalizer of $i_\ast$ and $c_\ast$ if, and only if, the family of open neighbourhoods of $\ast$ has a smallest element. If $O$ is such a neighbourhood, then $q_O:\overrightarrow{S^1}\times X\to X_O$ is the coequalizer.
\end{corol}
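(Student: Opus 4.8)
The plan is to use the correspondence, set up by the preceding lemmas, between locally increasing maps coequalizing $(i_\ast,c_\ast)$ and open neighbourhoods of $\ast$. The key observation is that a locally increasing map $f:\overrightarrow{S^1}\times X\to Y$ satisfies $f\circ i_\ast=f\circ c_\ast$ if and only if $f(s,\ast)=f(1,\ast)$ for every $s\in S^1$, that is, if and only if $\ast\in K(f)$; by Lemma \ref{lemOuvert1} this forces $K(f)$ to be an open neighbourhood of $\ast$. Conversely, each open neighbourhood $O$ of $\ast$ gives a map $q_O$ that coequalizes the pair, since $\ast\in O=K(q_O)$ and $q_O$ is locally increasing by Lemma \ref{lem:qO_morphism}.

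For the ``if'' direction I would assume that the family of open neighbourhoods of $\ast$ has a smallest element $O$ and verify that $q_O$ is the coequalizer. The coequalizing identity $q_O\circ i_\ast=q_O\circ c_\ast$ holds because $\ast\in O=K(q_O)$. For the universal property, I would take an arbitrary locally increasing $f$ with $f\circ i_\ast=f\circ c_\ast$; then $K(f)$ is an open neighbourhood of $\ast$, so minimality of $O$ yields $O\subset K(f)$, and Lemma \ref{lem:qOequaliser} provides the unique factorization of $f$ through $q_O$. This is precisely the universal property exhibiting $q_O$ as the coequalizer.

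For the ``only if'' direction I would start from an assumed coequalizer $c:\overrightarrow{S^1}\times X\to Q$ and prove that $K(c)$ is the smallest open neighbourhood of $\ast$. Since $c$ coequalizes the pair, $\ast\in K(c)$, so $K(c)$ is an open neighbourhood of $\ast$ by Lemma \ref{lemOuvert1}. To show minimality, I would fix an arbitrary open neighbourhood $O$ of $\ast$. As $q_O$ coequalizes $(i_\ast,c_\ast)$, the universal property of $c$ supplies a unique map $g_O$ with $q_O=g_O\circ c$. Now if $t\in K(c)$, then $c(s,t)$ is independent of $s\in S^1$, whence $q_O(s,t)=g_O(c(s,t))$ is also independent of $s$; this means $t\in K(q_O)=O$. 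Hence $K(c)\subset O$ for every such $O$, so $K(c)$ is the least element of the family.

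Given the machinery already in place, the argument is largely categorical bookkeeping: the genuine content lies in identifying $K(c)$ with the smallest neighbourhood in the ``only if'' direction. I expect the only point demanding attention to be the clean extraction of this minimum, namely combining the factorization $q_O=g_O\circ c$ with the defining property of $K(\,\cdot\,)$; no topological difficulty beyond the already-proven lemmas should arise.
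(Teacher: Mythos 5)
Your proposal is correct and follows essentially the same route as the paper: both directions rest on Lemma \ref{lemOuvert1} (openness of $K(f)$), Lemma \ref{lem:qOequaliser} (factorization through $q_O$ when $O\subset K(f)$), and, for the ``only if'' direction, factoring $q_O$ through the assumed coequalizer to conclude $K(\text{coequalizer})\subset K(q_O)=O$. The only difference is presentational: you spell out explicitly why the factorization $q_O=g_O\circ c$ forces $K(c)\subset K(q_O)$, a step the paper compresses into a single ``hence.''
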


\begin{proof}
Assume that there is a coequalizer $f:\overrightarrow{S^1}\times X\to Y$ of $i_\ast$ and $c_\ast$. Let $O$ be an open neighbourhood of $\ast$. The map $q_O:\overrightarrow{S^1}\times X\to X_O$ coequalizes $i_\ast$ and $c_\ast$ so there is a map $h:Y\to X_0$ such that $q_O=h\circ f$, hence $K(f)\subset K(q_O)=O$.
Moreover we know from lemma \ref{lemOuvert1} that $K(f)$ is an open neighbourhood of $\ast$ in $X$. 

Conversely, let $O$ be the least element among the open neighbourhoods of $\ast$ in $X$. Given $f:\overrightarrow{S^1}\times X\to Y$ that coequalizes $i_\ast$ and $c_\ast$, the subset $K(f)$ is an open neighbourhood of $\ast$ (lemma \ref{lemOuvert1}) so $O\subset K(f)$. Thus, by lemma \ref{lem:qOequaliser}, there exists a unique factorization of $f$ through $q_O$.
\end{proof}

\begin{corol}\label{corol:colimiteCylindreStrict}
Let $X$ be a strictly locally ordered space. Assume that $\mathcal{V}$ is an open neighbourhood basis of $\ast$ satisfying the following property: for every $O\in \mathcal{V}$ there is a strict ordered basis $\obase{}$ of $X$ such that $(X/O)\cap B$ is order-convex in each $B\in\obase{}$. 
Then, in the category of strictly locally ordered spaces, the coequalizer of $i_\ast$ and $c_\ast$ exists if, and only if, 
$\mathcal V$ has a least element.
If $O$ is the least element of \(\mathcal V\), then $q_O:\overrightarrow{S^1}\times X\to X_O$ is the coequalizer of $i_\ast$ and $c_\ast$.
\end{corol}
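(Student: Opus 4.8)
The plan is to mirror the proof of Corollary \ref{corol:colimiteCylindre} almost verbatim; the only genuinely new point is that the candidate coequalizer $q_O$ must be an object and a morphism of the subcategory of strictly locally ordered spaces, and the order-convexity clause attached to $\mathcal V$ is exactly what delivers this through Remark \ref{rmq:XO strict condtion}. Throughout, I will use that Lemma \ref{lemOuvert1}, Lemma \ref{lem:qO_morphism}, and Lemma \ref{lem:qOequaliser} hold for arbitrary locally increasing maps, so they apply unchanged in the strict setting.

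For the ``if'' direction, suppose $O$ is the least element of $\mathcal V$. By hypothesis there is a strict ordered basis $\obase{}$ of $X$ with $(X/O)\cap B$ order-convex in each $B\in\obase{}$, so Remark \ref{rmq:XO strict condtion} makes $X_O$ strictly locally ordered; moreover $q_O$ is locally increasing by Lemma \ref{lem:qO_morphism}, and $K(q_O)=O\ni\ast$, so $q_O$ coequalizes $i_\ast$ and $c_\ast$. Now take any locally increasing $f:\overrightarrow{S^1}\times X\to Y$ into a strictly locally ordered space with $f\circ i_\ast=f\circ c_\ast$. Lemma \ref{lemOuvert1} makes $K(f)$ an open neighbourhood of $\ast$; since $\mathcal V$ is a neighbourhood basis there is $O'\in\mathcal V$ with $O'\subset K(f)$, and minimality of $O$ gives $O\subset O'\subset K(f)$. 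Lemma \ref{lem:qOequaliser} then yields a unique locally increasing factorization $f=h\circ q_O$, and since both $X_O$ and $Y$ are strictly locally ordered, $h$ is a morphism of the subcategory. Hence $q_O$ is the coequalizer there.

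For the ``only if'' direction, assume the coequalizer $f:\overrightarrow{S^1}\times X\to Y$ exists in the category of strictly locally ordered spaces. For each $O\in\mathcal V$, the hypothesis together with Remark \ref{rmq:XO strict condtion} again makes $X_O$ strictly locally ordered, so $q_O$ is an admissible morphism of the subcategory coequalizing the pair; the universal property produces $h:Y\to X_O$ with $q_O=h\circ f$, whence $K(f)\subset K(q_O)=O$. Thus $K(f)$ is contained in every element of $\mathcal V$. Since $K(f)$ is an open neighbourhood of $\ast$ by Lemma \ref{lemOuvert1} and $\mathcal V$ is a neighbourhood basis, there is $O_0\in\mathcal V$ with $O_0\subset K(f)\subset O$ for all $O\in\mathcal V$, so $O_0$ is the least element of $\mathcal V$.

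The main obstacle to watch is not the categorical skeleton, which is identical to Corollary \ref{corol:colimiteCylindre}, but the bookkeeping that keeps every object and map inside the strict subcategory: that $X_O$ is strictly locally ordered (secured by the order-convexity clause of the hypothesis via Remark \ref{rmq:XO strict condtion}) and that the factorizing $h$ is a legitimate morphism (automatic, as morphisms are merely locally increasing maps). A secondary subtlety is the passage from ``all open neighbourhoods'' in Corollary \ref{corol:colimiteCylindre} to the distinguished family $\mathcal V$: extracting a least element of $\mathcal V$ genuinely uses that $\mathcal V$ is a neighbourhood basis, so that $K(f)$ can be squeezed between some $O_0\in\mathcal V$ and an arbitrary $O\in\mathcal V$.
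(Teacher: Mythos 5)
Your proposal is correct and takes essentially the same approach as the paper: the paper's entire proof reads ``The proof of Corollary \ref{corol:colimiteCylindre} still holds taking Remark \ref{rmq:XO strict condtion} into account,'' and your argument is precisely that proof spelled out, with Remark \ref{rmq:XO strict condtion} keeping $X_O$ and $q_O$ inside the strict subcategory. Your explicit handling of the one point the paper leaves implicit --- that the neighbourhood-basis property of $\mathcal V$ is needed to squeeze some $O_0\in\mathcal V$ inside $K(f)$ and so exhibit a least element of $\mathcal V$ rather than of all open neighbourhoods --- is a faithful (and welcome) elaboration, not a different route.
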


\begin{proof}
The proof of Corollary \ref{corol:colimiteCylindre} still holds taking Remark \ref{rmq:XO strict condtion} into account.
\end{proof}

\begin{lem}\label{lemFermébis}
For every continuous map $f:S^1\times X\to Y$ with $Y$ locally Hausdorff, the set $K(f)$ is a closed subset of $X$.
\end{lem}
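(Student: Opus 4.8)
The plan is to prove that $K(f)$ is closed by showing it contains all of its limit points, working with nets since $X$ need not satisfy any countability axiom. First I would recast the definition: by \eqref{eqn:Kf}, a level $t$ belongs to $K(f)$ exactly when the map $f\circ i_t:S^1\to Y$ is constant. So let $(t_\lambda)$ be a net in $K(f)$ converging to some $t_0\in X$; the goal is to prove $t_0\in K(f)$, i.e.\ that $f\circ i_{t_0}$ is constant on $S^1$.

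For each index $\lambda$, write $c_\lambda\in Y$ for the value taken by the constant map $f\circ i_{t_\lambda}$, so that $c_\lambda=f(s,t_\lambda)$ for every $s\in S^1$. Fixing an arbitrary $s\in S^1$ and using that $t\mapsto f(s,t)$ is continuous (being a restriction of $f$), the convergence $t_\lambda\to t_0$ yields $c_\lambda=f(s,t_\lambda)\to f(s,t_0)$. The crucial observation is that this holds simultaneously for \emph{every} $s$: the single net $(c_\lambda)$ converges to each point of the image $C:=f\big(i_{t_0}(S^1)\big)$.

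Here lies the main obstacle, and the place where local Hausdorffness intervenes: in a merely locally Hausdorff space a net may converge to several distinct limits (think of the line with a doubled origin), so one cannot immediately conclude that $C$ is a singleton. The remedy is to exploit locality. Let $L\subseteq Y$ be the set of all limits of the net $(c_\lambda)$. Given $y\in L$, choose a Hausdorff open neighbourhood $W$ of $y$; since $W$ is open and $c_\lambda\to y$, the net is eventually in $W$, and as $W$ is Hausdorff the tail cannot converge to a second point of $W$. Hence $L\cap W=\{y\}$, which shows that every point of $L$ is isolated in $L$.

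Finally I would combine this with connectedness. The set $C$ is the continuous image of the connected space $S^1$, hence connected, and by the two previous paragraphs $C\subseteq L$. For $y\in C\subseteq L$ the neighbourhood $W$ above satisfies $C\cap W\subseteq L\cap W=\{y\}$, so $y$ is isolated in $C$; thus $C$ carries the discrete topology. A connected discrete space has at most one point, so $C$ is a singleton, which says precisely that $f\circ i_{t_0}$ is constant, i.e.\ $t_0\in K(f)$. Therefore $K(f)$ is closed. I note that this argument uses only the connectedness of $S^1$ (not its compactness) and only the continuity of $f$.
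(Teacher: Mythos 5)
Your proof is correct, and it reaches the conclusion by a genuinely different decomposition than the paper's. The paper fixes $s\in S^1$, picks a Hausdorff open $U\ni f(s,t)$, and uses \emph{joint} continuity of $f$ to find a product neighbourhood $\alpha\times A$ of $(s,t)$ with $f(\alpha\times A)\subset U$; running the net argument inside that single Hausdorff set shows $f\circ i_t$ is \emph{locally constant}, and connectedness of $S^1$ is then applied to the \emph{domain}. You instead isolate a reusable fact about locally Hausdorff spaces --- the set $L$ of limits of any net has all of its points isolated in $L$ --- and apply connectedness to the \emph{image}: the connected set $C=f\bigl(i_{t_0}(S^1)\bigr)$ sits inside $L$, is therefore discrete, hence a singleton. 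Your route buys two things: it never needs a product neighbourhood, so only separate continuity of $f$ (continuity of $t\mapsto f(s,t)$ for the inclusion $C\subseteq L$, and of $s\mapsto f(s,t_0)$ for connectedness of $C$) is actually used, whereas the paper's localization step genuinely requires joint continuity; and the discreteness-of-limit-sets lemma stands on its own as a statement about locally Hausdorff spaces. The paper's route, in exchange, exhibits the local constancy of $f\circ i_t$ on explicit arcs, which is the form of the statement it reuses informally elsewhere. Both arguments use only connectedness of $S^1$ (not compactness), so your version equally supports the remark following the lemma; indeed, when $Y$ is Hausdorff your $L$ is automatically a singleton and connectedness is not needed at all, recovering that remark's stronger claim for arbitrary spaces in place of $S^1$.
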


\begin{proof}
Let $t\in \overline{K(f)}$, we are to show that the continuous map $f\circ i_t:S^1\to Y$ is locally constant, which is sufficient to prove that $f\circ i_t$ is constant (\ie\:$t\in K(f)$) because $S^1$ is a connected space. 

Let $s\in S^1$ and let $U\in \mathcal{O}(Y)$ such that $f(s,t)\in U$ and $U$ Hausdorff.

Since $f$ is continuous at $(s,t)$, there exists an open neighbourhood $\alpha\times A$ of $(s, t)$ such that $f(\alpha\times A)\subset U$ with $\alpha$ denoting a proper open arc and $A$ an open of $X$. From $t \in\overline{K(f)}$, we deduce that there is a net $(t_i)_{i\in I}$ of $A\cap K(f)$ that converges to $t$.
Let $s'\in \alpha$. The nets $(s,t_i)_{i\in I}$ and $(s',t_i)_{i\in I}$ converge respectively to $(s,t)$ and $(s',t)$. 
Each $t_i$ belongs to $K(f)$ hence $f(s,t_i)=f(s',t_i)$. The images of the nets $(s,t_i)_{i\in I}$ and $(s',t_i)_{i\in I}$ under $f$ are thus equal and converge, by continuity of $f$, to $f(s,t)$ and $f(s',t)$. We deduce that $f(s,t)=f(s',t)$ because $U$ is Hausdorff.
%
\end{proof}

\begin{rmq}
Connectedness is the only property of $S^1$ that is really used in the above proof. Under the stronger assumption that $Y$ is Hausdorff, the above lemma is valid for any topological space instead of $S^1$.
\end{rmq}

\begin{prop}\label{prop:XOhausdorff}
If $X$ is a locally ordered space whose underlying topology is Hausdorff, then the two following propositions are equivalent:
\begin{enumerate}
	\item The underlying topology of the locally ordered space $X_O$ is Hausdorff. 
	\item The open subset $O$ of $X$ is closed.
\end{enumerate}
\end{prop}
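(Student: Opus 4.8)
The plan is to prove the two implications separately, using Lemma \ref{lemFermébis} for $(1)\Rightarrow(2)$ and a clopen decomposition of $X_O$ for $(2)\Rightarrow(1)$.

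For $(1)\Rightarrow(2)$ the argument is essentially immediate. By construction the topology of $X_O$ is the final topology associated with $q_O$, so the underlying map $q_O:S^1\times X\to X_O$ is continuous. If $X_O$ is Hausdorff then it is in particular locally Hausdorff, so Lemma \ref{lemFermébis} applies to $f=q_O$ and yields that $K(q_O)$ is closed in $X$. Since $K(q_O)=O$, as observed right after the definition of $q_O$, we conclude that $O$ is closed.

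For $(2)\Rightarrow(1)$ I would exploit that $O$ is now clopen. First I would check that the image of $O$ is a clopen subset of $X_O$: indeed $q_O^{-1}(O)=S^1\times O$ is open because $O$ is open, while $q_O^{-1}\big(\{(s,t)\mid s\in S^1,\ t\in X/O\}\big)=S^1\times(X/O)$ is open because $O$ is closed. Hence both $O$ and its complement are saturated open subsets of $X_O$, and $X_O$ splits as the topological disjoint union of the two subspaces $O$ and $\{(s,t)\mid s\in S^1,\ t\in X/O\}$. The second step is to identify the topology of each piece. On the first, $q_O$ restricts to the projection $S^1\times O\to O$, a quotient map, so the subspace topology of $O$ in $X_O$ coincides with its subspace topology in $X$, which is Hausdorff since $X$ is. On the second, $q_O$ restricts to a bijection onto its image which, being a saturated open subset, is homeomorphic to the product $S^1\times(X/O)$; the latter is Hausdorff as a product of the Hausdorff spaces $S^1$ and $X/O$. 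A topological disjoint union of Hausdorff spaces being Hausdorff, $X_O$ is Hausdorff.

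I expect the main obstacle to be the second step of the converse, namely checking that the final topology of $q_O$ restricted to each clopen piece is the expected one. This rests on the fact that restricting a quotient map to a saturated open subset is again a quotient map, so that the image of $O$ really carries the subspace topology from $X$ and the image of $S^1\times(X/O)$ really carries the product topology. Once these identifications are granted, separating two distinct points of $X_O$ is routine: two points lying in the same piece are separated inside it, and two points in different pieces are separated by the clopen splitting itself.
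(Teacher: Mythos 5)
Your proof is correct, but your treatment of $(2)\Rightarrow(1)$ takes a genuinely different route from the paper's. The direction $(1)\Rightarrow(2)$ is exactly the paper's argument: apply Lemma \ref{lemFermébis} to $f=q_O$ (continuous since $X_O$ carries the final topology, and Hausdorff implies locally Hausdorff) and conclude from $K(q_O)=O$. For the converse, the paper separates points directly inside the explicit basis: given $u\neq u'$ in $X_O$, when the second components $t\neq t'$ differ it separates them by disjoint open posets $A,A'$ of the Hausdorff space $X$ and takes $U_{\alpha,A}$, $U_{\alpha,A'}$ for a common arc $\alpha$; when $t=t'\in X/O$ and $s\neq s'$ it takes disjoint arcs $\alpha,\alpha'$ and a single open poset $A\subset X/O$ (possible precisely because $O$ is closed) and uses $U_{\alpha,A}$, $U_{\alpha',A}$. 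You instead split $X_O$ into the two saturated clopen pieces $O$ and $S^1\times(X/O)$ and identify the topology of each piece via the standard fact that a quotient map restricted to a saturated open subset is again a quotient map, obtaining $X_O\cong O\sqcup\big(S^1\times(X/O)\big)$ as topological spaces, whence Hausdorffness by stability under subspaces, products and disjoint unions. Both arguments are complete. Yours is more structural and buys more: it pins down the homeomorphism type of $X_O$ when $O$ is clopen, which also makes the direction $(2)\Rightarrow(1)$ of Proposition \ref{prop:XOclosedOrder} essentially transparent --- the paper's proof of that proposition implicitly redoes your decomposition, piece by piece. The paper's argument is in turn more elementary and self-contained: it stays within the already-constructed basis of sets $U_{\alpha,A}$ and requires no point-set facts about restrictions of quotient maps to saturated subsets.
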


\begin{proof}
The first point imply the second one by lemma \ref{lemFermébis} (take $f=q_O$). Conversely, assume that $O$ is closed. Let $u,u'\in X_O$ with $u\neq u'$. We have two situations to consider:
\begin{itemize}
	\item There are $s,s'\in S^1$ and $t,t'\in X$ with $t\neq t'$ such that $u=q_O(s,t)$ and $u'=q_O(s',t')$. Since $X$ is Hausdorff, there are disjoint open posets $A$ and $A'$ containing $t$ and $t'$ respectively. For any proper open arc $\alpha$ containing $s$ and $s'$, the subsets $U_{\alpha,A}$ and $U_{\alpha,A'}$ are disjoint open neighbourhood of $u$ and $u'$ respectively.
	\item There are $t\in X$ and $s,s'\in S^1$ with $s\neq s'$ and $u=q_O(s,t)$ and $u'=q_O(s',t)$. Let $\alpha$ and $\alpha'$ be disjoint proper open arcs containing $s$ and $s'$ respectively and let $A$ be open poset containing $t$ and included in $X/O$ (which is an open subset by hypothesis). Then the subsets $U_{\alpha,A}$ and $U_{\alpha',A}$ are disjoint open neighbourhood of $u$ and $u'$ respectively.
\end{itemize}
\end{proof}

\begin{prop}\label{prop:XOclosedOrder}
Assume that $X$ is a locally Nachbin ordered space. The two following propositions are equivalent:
\begin{enumerate}
	\item The space $X_O$ is locally Nachbin ordered.
	\item The open subset $O$ of $X$ is closed.
\end{enumerate}
\end{prop}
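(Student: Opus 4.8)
The plan is to treat the two implications separately, the first being immediate and the second being the substantial one.

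For $(1)\Rightarrow(2)$, I would argue directly from the tools already at hand. By \cite{Nachbin65} the underlying topology of a locally Nachbin ordered space is locally Hausdorff, so if $X_O$ is locally Nachbin ordered then it is in particular locally Hausdorff. The map $q_O$ is locally increasing (Lemma \ref{lem:qO_morphism}), hence continuous, and satisfies $K(q_O)=O$; so Lemma \ref{lemFermébis}, applied with $Y=X_O$, shows at once that $O=K(q_O)$ is closed.

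For $(2)\Rightarrow(1)$, I would assume $O$ clopen, so that $X/O$ is open as well. Fix $u\in X_O$ and $W\in\pyccmd{X_O}$ with $u\in W$. Since the posets $(U_{\alpha,A},\leq_{\alpha,A})$ form an ordered basis, I first pick a basic neighbourhood $u\in U_{\alpha_0,A_0}\subset_{lax}W$, after which it suffices to produce a Nachbin open poset $W'$ with $u\in W'\subset_{lax}U_{\alpha_0,A_0}$. If $u=t\in O$, then $O\cap A_0$ is an open neighbourhood of $t$ lying in $\pyccmd{X}$ by Lemma \ref{lem:1}, and the local Nachbin hypothesis applied to $t$ and $O\cap A_0$ yields a Nachbin open poset $A_1$ with $t\in A_1\subset_{lax}O\cap A_0$, hence $A_1\subset O$ and $A_1\subset_{lax}A_0$. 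Since $A_1\subset O$, a direct inspection of the definition of $\leq_{\alpha_0,A_1}$ shows that $U_{\alpha_0,A_1}$ is, as an ordered space, just $(A_1,\leq_{A_1})$, carrying the subspace topology it inherits from $X$; thus $W':=U_{\alpha_0,A_1}$ is Nachbin and $W'\subset_{lax}U_{\alpha_0,A_0}$. Note that closedness of $O$ plays no role here.

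The substantial case is $u=(s,t)$ with $t\in X/O$, and it is precisely here that closedness of $O$ enters. Since $X/O$ is open, $(X/O)\cap A_0$ is an open neighbourhood of $t$, again in $\pyccmd{X}$ by Lemma \ref{lem:1}, and the local Nachbin hypothesis gives a Nachbin open poset $A_1$ with $t\in A_1\subset_{lax}(X/O)\cap A_0$, so in particular $A_1\subset X/O$. Choosing a proper open arc $\alpha_1$ with $s\in\alpha_1\subset\alpha_0$, I claim $W':=U_{\alpha_1,A_1}$ works. As $O\cap A_1=\emptyset$, the subset $(X/O)\cap A_1$ is trivially order-convex in $A_1$, so $\leq_{\alpha_1,A_1}=\leq^1_{\alpha_1,A_1}$ and $U_{\alpha_1,A_1}$ is exactly the product $\alpha_1\times A_1$, both as an ordered set and topologically. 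Two auxiliary facts then conclude the argument: a proper open arc is a Nachbin ordered space, since its standard order is carried by a homeomorphism onto the usual (closed) order of an open interval of $\real$; and a product of two Nachbin ordered spaces is Nachbin, since under the canonical homeomorphism $(Y_1\times Y_2)^2\cong Y_1^2\times Y_2^2$ the product order corresponds to the product of the two closed orders. Hence $\alpha_1\times A_1$ is Nachbin, and $\alpha_1\subset_{str}\alpha_0$ (which holds whenever one proper open arc is contained in another) together with $A_1\subset_{lax}A_0$ gives $U_{\alpha_1,A_1}\subset_{lax}U_{\alpha_0,A_0}$.

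The main obstacle is exactly this last case: one must recognize that closedness of $O$ is what allows the neighbourhood $A_1$ to be chosen inside $X/O$, thereby annihilating the transitive-closure corrections built into $\leq_{\alpha,A}$ and reducing $U_{\alpha_1,A_1}$ to an honest product $\alpha_1\times A_1$; everything else is the routine verification that proper open arcs and finite products of Nachbin spaces are Nachbin, together with the bookkeeping of the $\subset_{lax}$ relations and of the subspace topologies.
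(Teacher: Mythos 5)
Your proof is correct and follows essentially the same route as the paper's: the implication $(1)\Rightarrow(2)$ via Lemma \ref{lemFermébis} applied to $q_O$, and the converse by the same two-case analysis, using the locally Nachbin hypothesis to place a Nachbin open poset inside $O$ (respectively inside $X/O$, which is where closedness of $O$ enters) and then identifying $U_{\alpha,A'}$ with $A'$ (respectively with the product $\alpha\times A'$, both as a poset and topologically). The only differences are cosmetic: you make explicit the reduction to basic neighbourhoods and the auxiliary facts the paper leaves implicit (proper open arcs are Nachbin, finite products of Nachbin ordered spaces are Nachbin), and you shrink the arc to $\alpha_1\subset\alpha_0$ where the paper keeps $\alpha$ unchanged.
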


\begin{proof}
The first proposition imply the second one by lemma \ref{lemFermébis}. Conversely, assume that $O$ is closed.
Let $A$, $\alpha$ and $u$ be an element of $\pyccmd{X}$, an ordered arc, and an element of $U_{\alpha,A}$ respectively. 
We have two cases to deal with. 
On one hand, if $u=t\in O\cap A$, there is $A'\in\pyccmd{X}$ included in $O$ such that $t\in A'\subset_{lax} A$, and the partial order $\leq_{A'}$ is closed for the topology induced by $X$. So the sets $U_{\alpha,A'}$ and $A'$ are equal, the topologies induced on them by $X_O$ and $X$ are the same, and the partial orders $\leq_{\alpha,A'}$ and $\leq_{A'}$ coincide. Moreover, we have $U_{\alpha,A'}\subset_{lax}U_{\alpha,A}$. On the other hand, if $x=(s,t)\in \alpha\times (A\cap (X/O))$ with $X/O$ open, there is $A'\in\pyccmd{X}$ included in $X/O$ such that $t\in A'\subset_{lax} A$ and the partial order $\leq_{A'}$ is closed for the topology induced by $X$. So the sets $U_{\alpha,A'}$ and $\alpha\times A'$ are equal, the topologies induced on them by $X_O$ and $S^1\times X$ are the same, and the partial orders $\leq_{\alpha,A'}$ and $\leq_\alpha\!\times\!\leq_{A'}$ coincide. Moreover, we have $U_{\alpha,A'}\subset_{lax}U_{\alpha,A}$. 
\end{proof}

\begin{corol}\label{corol:colimiteCylindreNachbin}
In the category of (strictly) locally Nachbin ordered spaces, there exists a coequalizer of $i_\ast$ and $c_\ast$ if, and only if, the family of clopen neighbourhoods of $\ast$ has a smallest element.
If $O$ is such a neighbourhood, then $q_O:S^1\times X\to X_O$ is the coequalizer.
\end{corol}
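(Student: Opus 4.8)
The plan is to transplant the proof of Corollary~\ref{corol:colimiteCylindre} almost verbatim, replacing \emph{open} neighbourhoods of $\ast$ throughout by \emph{clopen} ones. The single new ingredient is that, inside the Nachbin world, the set $K(f)$ is forced to be closed in addition to open: recall that any locally Nachbin ordered space is locally Hausdorff (as noted after the citation of \cite{Nachbin65}), so Lemma~\ref{lemFermébis} becomes available whenever the target of $f$ lives in our category. I would also stress at the outset that the categories of locally ordered, strictly locally ordered, locally Nachbin ordered, and strictly locally Nachbin ordered spaces all share the same morphisms (the locally increasing maps), so the factorization Lemma~\ref{lem:qOequaliser} and the morphism property Lemma~\ref{lem:qO_morphism} apply unchanged; only the \emph{objects} are constrained.

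For the forward implication, suppose a coequalizer $f:\dcircle\times X\to Y$ of $i_\ast$ and $c_\ast$ exists in the category of (strictly) locally Nachbin ordered spaces. Since $Y$ is then locally Nachbin, hence locally Hausdorff, Lemma~\ref{lemFermébis} makes $K(f)$ closed while Lemma~\ref{lemOuvert1} makes it open; and $f\circ i_\ast=f\circ c_\ast$ gives $\ast\in K(f)$, so $K(f)$ is a clopen neighbourhood of $\ast$. To see it is the least such, take any clopen neighbourhood $O$ of $\ast$. By Proposition~\ref{prop:XOclosedOrder} the space $X_O$ is locally Nachbin ordered (and by Remark~\ref{rmq:XO strict condtion}, using precisely that $O$ is clopen, it is strictly locally ordered in the strict variant), so $X_O$ is an object of our category, and $q_O$ is a locally increasing map coequalizing $i_\ast,c_\ast$. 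The universal property of $f$ yields $h:Y\to X_O$ with $q_O=h\circ f$, whence $K(f)\subset K(q_O)=O$. Thus $K(f)$ is a clopen neighbourhood contained in every clopen neighbourhood of $\ast$, i.e.\ the smallest one.

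For the converse, let $O$ be the least clopen neighbourhood of $\ast$. Again by Proposition~\ref{prop:XOclosedOrder} (together with Remark~\ref{rmq:XO strict condtion} in the strict case) the object $X_O$ belongs to the category, and $q_O:\dcircle\times X\to X_O$ coequalizes $i_\ast$ and $c_\ast$ since $K(q_O)=O\ni\ast$. Given any morphism $f:\dcircle\times X\to Y$ into a (strictly) locally Nachbin ordered space with $f\circ i_\ast=f\circ c_\ast$, the set $K(f)$ is once more a clopen neighbourhood of $\ast$ by Lemmas~\ref{lemOuvert1} and \ref{lemFermébis}, so minimality of $O$ gives $O\subset K(f)$; Lemma~\ref{lem:qOequaliser} then supplies the unique factorization of $f$ through $q_O$. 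Hence $q_O$ is the coequalizer.

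I do not expect a genuine obstacle here, only a point that must be made carefully: the reason the quantification switches from all open neighbourhoods to only the clopen ones is exactly the interaction between Lemma~\ref{lemFermébis} (closedness of $K(f)$, powered by local Hausdorffness of Nachbin spaces) and Proposition~\ref{prop:XOclosedOrder} (membership of $X_O$ in the category). The clopenness hypothesis is thus simultaneously what keeps $X_O$ inside the category and what guarantees that $K(f)$ is itself clopen; verifying that these two uses align is the only subtlety, and no new computation beyond the already-proven lemmas is required.
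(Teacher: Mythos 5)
Your proposal is correct and follows essentially the same route as the paper's own proof: both directions rest on Lemmas \ref{lemOuvert1} and \ref{lemFermébis} to make $K(f)$ clopen, on Proposition \ref{prop:XOclosedOrder} together with Remark \ref{rmq:XO strict condtion} to keep $X_O$ inside the (strictly) locally Nachbin category, and on Lemma \ref{lem:qOequaliser} for the unique factorization. If anything, you are slightly more explicit than the paper in spelling out that local Hausdorffness of Nachbin spaces is what licenses Lemma \ref{lemFermébis}, and that all the categorical variants share the same morphisms; these points are left implicit in the original.
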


\begin{proof}
Assume that there is a coequalizer $f:S^1\times X\to Y$ of $i_\ast$ and $c_\ast$ with $Y$ a (strictly) locally Nachbin ordered space. Then, by lemma \ref{lemOuvert1} and lemma \ref{lemFermébis}, $K(f)$ is a clopen neighbourhood of $\ast$ in $X$. Let $O$ be a clopen neighbourhood of $\ast$. The map $q_O:S^1\times X\to X_O$ coequalizes $i_\ast$ and $c_\ast$ and, by remark \ref{rmq:XO strict condtion} and by proposition \ref{prop:XOclosedOrder}, $X_O$ is a (strictly) locally Nachbin ordered space, so there is a map $h:Y\to X_O$ such that $q_O=h\circ f$, hence $$K(f)\subset K(q_O)=O$$

Conversely, if $O$ is the least element among the clopen neighbourhoods of $\ast$ in $X$. Still by remark \ref{rmq:XO strict condtion} and by proposition \ref{prop:XOclosedOrder}, the space $X_O$ is a (strictly) locally Nachbin ordered space. For every $f:S^1\times X\to Y$ that coequalizes $i_\ast$ and $c_\ast$ with $Y$ (strictly) locally Nachbin ordered, the subset $K(f)$ is an clopen neighbourhood of $\ast$ (see lemmas \ref{lemOuvert1} and \ref{lemFermébis}). Therefore $O$ is included in $K(f)$, and there exists a unique factorization of $f$ through $q_O$.
\end{proof}

\begin{corol}\label{corol:colimiteCylindreHausdorff}
Assume that the underlying topology of $X$ is Hausdorff. In the category of (strictly) locally (Nachbin) ordered Hausdorff spaces, there exists a coequalizer of $i_\ast$ and $c_\ast$ if, and only if, the family of clopen neighbourhoods of $\ast$ has a smallest element $O$. In that case, $q_O:S^1\times X\to X_O$ is the coequalizer.
\end{corol}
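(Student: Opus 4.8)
The plan is to mirror the proof of Corollary~\ref{corol:colimiteCylindreNachbin}, the only genuinely new ingredient being Proposition~\ref{prop:XOhausdorff}, which ties the Hausdorffness of $X_O$ to the clopenness of $O$. The guiding principle is that, for $X$ Hausdorff, the single condition ``$O$ is clopen'' simultaneously places $X_O$ in whichever variant of the target category is under consideration: it is Hausdorff by Proposition~\ref{prop:XOhausdorff}, locally Nachbin ordered by Proposition~\ref{prop:XOclosedOrder}, and, when needed, strictly locally ordered by Remark~\ref{rmq:XO strict condtion}, since a clopen $O$ always admits a strict basis meeting the required order-convexity condition. Thus clopenness of $O$ is exactly the property that makes $X_O$ an object of the ambient category.

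For the forward implication I would assume a coequalizer $f:\overrightarrow{S^1}\times X\to Y$ exists in the chosen category, so that $Y$ is Hausdorff and hence locally Hausdorff. Then $K(f)$ is open by Lemma~\ref{lemOuvert1} and closed by Lemma~\ref{lemFermébis}, that is, a clopen neighbourhood of $\ast$. To see that it is the smallest such, I would take an arbitrary clopen neighbourhood $O$ of $\ast$: the object $X_O$ lies in the category by the principle above, and $q_O$ coequalizes $i_\ast$ and $c_\ast$ (since $\ast\in O=K(q_O)$), so the universal property of $f$ yields $h:Y\to X_O$ with $q_O=h\circ f$, whence $K(f)\subset K(q_O)=O$.

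For the converse, letting $O$ be the least clopen neighbourhood of $\ast$, I would first note that $X_O$ belongs to the category (again by the principle above) and that $q_O$ is a locally increasing morphism coequalizing $i_\ast$ and $c_\ast$ by Lemma~\ref{lem:qO_morphism}. Given any $f:\overrightarrow{S^1}\times X\to Y$ in the category that coequalizes the pair, the set $K(f)$ is a clopen neighbourhood of $\ast$ by Lemmas~\ref{lemOuvert1} and~\ref{lemFermébis}, the latter being applicable precisely because $Y$ is Hausdorff; minimality of $O$ then gives $O\subset K(f)$, and Lemma~\ref{lem:qOequaliser} supplies the unique factorization of $f$ through $q_O$. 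This exhibits $q_O$ as the coequalizer.

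The step requiring the most care is not any single computation but the bookkeeping over the several variants encoded by the parenthetical ``(strictly) locally (Nachbin)'': one must verify that clopenness of $O$ is simultaneously sufficient for $X_O$ to satisfy whatever combination of strictness, local-Nachbin, and Hausdorff conditions defines the working category, and that the Hausdorff hypothesis on the codomain $Y$ is exactly what licenses the invocation of Lemma~\ref{lemFermébis} in both directions. Once this is observed, the argument is formally identical to that of Corollary~\ref{corol:colimiteCylindreNachbin}, now with the clopen neighbourhoods of $\ast$ playing the role they played there.
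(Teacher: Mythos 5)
Your proposal is correct and follows essentially the same route as the paper's own proof: both directions mirror Corollary~\ref{corol:colimiteCylindreNachbin}, with $K(f)$ shown clopen via Lemmas~\ref{lemOuvert1} and~\ref{lemFermébis}, membership of $X_O$ in the ambient category secured by Remark~\ref{rmq:XO strict condtion} together with Propositions~\ref{prop:XOhausdorff} and~\ref{prop:XOclosedOrder}, and the factorizations supplied by the universal property and Lemma~\ref{lem:qOequaliser}. Your explicit remarks that Hausdorffness of $Y$ licenses Lemma~\ref{lemFermébis} and that clopenness of $O$ yields the order-convex strict basis are exactly the bookkeeping the paper leaves implicit.
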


\begin{proof}
Assume that there is a coequalizer $f:S^1\times X\to Y$ of $i_\ast$ and $c_\ast$ with $Y$ a (strictly) locally (Nachbin) ordered Hausdorff space. Then, by lemmas \ref{lemOuvert1} and \ref{lemFermébis}, $K(f)$ is a clopen neighbourhood of $\ast$ in $X$. Let $O$ be a clopen neighbourhood of $\ast$. The map $q_O:S^1\times X\to X_O$ coequalizes $i_\ast$ and $c_\ast$ and, by remark \ref{rmq:XO strict condtion} and by propositions \ref{prop:XOhausdorff} and \ref{prop:XOclosedOrder}, $X_O$ is a (strictly) locally (Nachbin) ordered Hausdorff space, so there is a map $h:Y\to X_0$ such that $q_O=h\circ f$, hence $$K(f)\subset K(q_O)=O$$

Conversely, let $O$ be the least clopen neighbourhood of $\ast$. Still by remark \ref{rmq:XO strict condtion} and by propositions \ref{prop:XOhausdorff} and \ref{prop:XOclosedOrder}, the space $X_O$ is a (strictly) locally (Nachbin) ordered space. For every $f:S^1\times X\to Y$ that coequalizes $i_\ast$ and $c_\ast$ with $Y$ a (strictly) locally (Nachbin) ordered space, the subset $K(f)$ is an clopen neighbourhood of $\ast$ (see lemmas \ref{lemOuvert1} and \ref{lemFermébis}) so $O$ is included in $K(f)$. So there exists a unique factorization of $f$ through $q_O$.
\end{proof}

It is now time to provide some examples:

\begin{ex}\label{exCylinder}
The spaces $\real$ and $\rational$ are the real line and 
the space of rational numbers. 
The coarsest refinement of the 
topology of $\real$ in which every singleton $\{x\}$ with $x\not=0$ is open induces a topological space that is denoted by $\real_\star$. 

The collection of open subsets of $\real$, each equipped with the standard order, forms a strictly locally Nachbin ordered space which we denote by $\dreal$. 
Let $S$ be an infinite set with a distinguished element $\bar s$. We denote by $\mathcal U$ the topological space on $S$ in which a subset is open when it contains $\bar s$ \footnote{The open subsets of $\mathcal U$ are the elements of the \emph{principal ultrafilter} on $S$ generated by $\bar s$ \cite[p.233]{DP02}}.
The table here below summarizes the cases where the coequalizer of $i_\ast$ and $c_\ast$ exists (see \ref{rmq:topToLocOrd}). 
The distinguished elements of $\real_\star$ and $\mathcal{U}$ are respectively $0$ and $\bar s$.  
In all the other cases, the distinguished element can be any point $\ast$ of the space.
\begin{center}
\begin{tabular}{|l|c|c|c|c|c|c|}
\hline
\hspace{-2.1mm}\begin{minipage}{6mm}
\begin{tikzpicture}
\draw (0,0.9)--(6.5,0);
\draw (6.3,0.49) node {\(X\)};
\draw (0.6,0.25) node {\scriptsize category};
\end{tikzpicture}
\end{minipage}
& $\real$ & $\dreal$ & $\dcircle$ & $\real_\star$ & $\rational$ & $\mathcal U$ \\
\hline
\scriptsize (strictly) locally ordered spaces & \multicolumn{5}{c|}{\xmark} & \cmark \\\hline 
\scriptsize (strictly) locally ordered Hausdorff spaces & \multicolumn{3}{c|}{\cmark} & \multicolumn{2}{c|}{\xmark} & \cmark \\\hline 
\scriptsize (strictly) locally Nachbin ordered spaces & \multicolumn{3}{c|}{\cmark} & \multicolumn{2}{c|}{\xmark} & n/a \\\hline 
\scriptsize (strictly) locally Nachbin ordered Hausdorff spaces & \multicolumn{3}{c|}{\cmark} & \multicolumn{2}{c|}{\xmark} & n/a \\\hline
\end{tabular}
\end{center}

In the first three columns, the coequalizer, when it exists, 
is the second projection. It is also the coequalizer when the space under consideration is $\mathcal U$ and the ambient category is that of locally ordered \underline{Hausdorff} spaces. In the latter case, if we drop the Hausdorffness assumption, the coequalizer is the quotient map $q_O$ where $O=\{\bar s\}$, see (Eq. \ref{eqn:q_O}).
All the spaces appearing in the above table are strict and satisfy the hypotheses of Corollary \ref{corol:colimiteCylindreStrict}. Consequently, the results summarized in the table are valid regardless of the fact that the coequalizers are taken in categories of strict or lax local orders.
\end{ex}

\section{Zebra cylinder}
We emphasize that any point of $[0,1]$ admits the whole space as its smallest clopen neighbourhood, so the according to the results from Section \ref{section:cylinder} the coequalizer of $(i_0,c_0):S^1\to S^1\times [0,1]$ with the \emph{product ordered basis} on $S^1\times [0,1]$ exists in the category of locally ordered Hausdorff spaces (it is actually $[0,1]$). 

In this Section, we describe a strict ordered basis $\obase{}$ on $S^1\times [0,1]$ so that the coequalizer of the morphisms $(i_0,c_0)$ no longer exists in the category of (strictly) locally (Nachbin) ordered (Hausdorff) spaces. 
As before, the strategy consists of setting the ordered basis in a way that:    
\begin{itemize}
\item all the sections in a chosen neighbourhood $V$ of the section $i_0$ are collapsed, and 
\item the neighbourhood $V$ can be made arbitrarily small.
\end{itemize}

Let $\natbar$ be the set of extended natural numbers $\nat\sqcup \{+\infty\}$, and $(d_n)_{n\in\mathbb N}$ be a strictly decreasing sequence with values in the compact unit interval $I:=[0,1]$. Assume that $\inf\{d_n|n\in\nat\}=0$. For all $n\in\nat$, $I(n)$ denotes the interval $[d_{2n+1},d_{2n}]$ and $I(+\infty)$ the degenerated interval $\{0\}$.

The elements of $\obase{}$ are of the form $\alpha\times O$ where $\alpha$ is an ordered arc and $O$ is an open subset of $I$, partially ordered  as follows:

\begin{equation}\label{eqn:ordZ}
(s,u)\preceq^O_{\alpha}(s',u')\text{\quad if \quad}
\left\{\quad
\begin{gathered}
 s\leq_{\alpha} s'\text{ and } u=u'\in I(n)\ \text{for some $n\in\natbar$}, \\
 \text{or}\ \ s= s'\text{ and } u=u'\ .\hfill
\end{gathered} 
\right.
\end{equation}

We denote by $Z$ the resulting \emph{strictly locally Nachbin ordered Hausdorff space}.
   
    \begin{figure}[!h]
    \caption{Zebra cylinder}
    \begin{center}
    \includegraphics{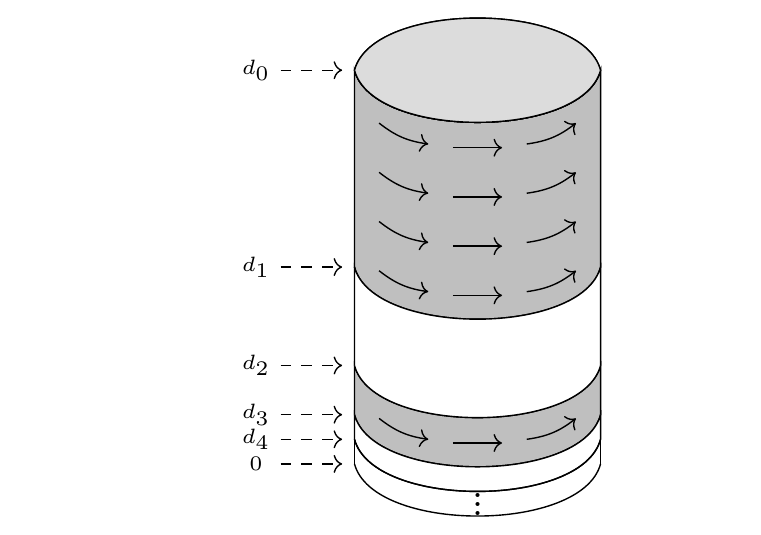}
    \end{center}
    \end{figure}

Let $f:Z\to X$ be a locally increasing map such that $$f\circ i_0=f\circ c_0$$
and recall that $K(f)$ is the set 
\begin{equation*}
\{t\in I|\: \forall s,s'\in S^1, f(s,t)=f(s',t)\}\ .
\end{equation*}
We prove a result similar to lemma \ref{lemOuvert1}.
\begin{lem}\label{lem:ouvertIm}
There exists $n\in\mathbb{N}$ such that, for all $m\geq n$, $I(m)\subset K(f)$.
\end{lem}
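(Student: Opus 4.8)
The plan is to replay, in the zebra order, the compactness argument underlying Lemma \ref{lemOuvert1}; the one new feature is that the horizontal (that is, $S^1$-directed) comparisons that argument produces survive only at levels lying inside a stripe $I(n)$, which is exactly why the conclusion must be phrased stripe-by-stripe rather than as an open neighbourhood of $0$.

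First I would record that $f\circ i_0=f\circ c_0$ means $f(s,0)=f(1,0)$ for every $s\in S^1$, so $I(+\infty)=\{0\}\subset K(f)$. Fixing an open poset $U\in\pyccmd{X}$ with $f(1,0)\in U$, I would use that $f$ is locally increasing at each $(s,0)$ (and that $f(s,0)=f(1,0)\in U$) to extract, via Definition \ref{def:morphismLPO}, a basic open $\alpha_s\times O_s$ of $Z$ with $\alpha_s$ an ordered arc containing $s$, $O_s$ an open neighbourhood of $0$ in $I$, $f(\alpha_s\times O_s)\subset U$, and the restriction of $f$ to $(\alpha_s\times O_s,\preceq^{O_s}_{\alpha_s})$ increasing into $U$. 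Compactness of $S^1$ then yields a finite subfamily $\{\alpha_s\mid s\in J\}$ still covering $S^1$, and I would set $O:=\bigcap_{s\in J}O_s$, an open neighbourhood of $0$ in $I$.

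Next I would locate the deep stripes inside $O$: since $O$ is open in $[0,1]$ and contains $0$, there is $\varepsilon>0$ with $[0,\varepsilon)\subset O$, and since $d_{2m}\to 0$ I can pick $n\in\nat$ with $d_{2n}<\varepsilon$, whence $I(m)=[d_{2m+1},d_{2m}]\subset[0,\varepsilon)\subset O$ for all $m\geq n$. It then remains to run the chaining step of Lemma \ref{lemOuvert1} at an arbitrary level $t\in I(m)$ with $m\geq n$: for $x,y\in S^1$ the finite cover produces $r_0=x,\dots,r_N=y$ with $r_k,r_{k+1}\in\alpha_{s_k}$ and $r_k\leq_{\alpha_{s_k}}r_{k+1}$. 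Because $t\in I(m)$ with $m\in\natbar$, the first clause of (\ref{eqn:ordZ}) applies and gives $(r_k,t)\preceq^{O_{s_k}}_{\alpha_{s_k}}(r_{k+1},t)$ (using $t\in O\subset O_{s_k}$); increasingness of $f$ yields $f(r_k,t)\leq_U f(r_{k+1},t)$, and transitivity gives $f(x,t)\leq_U f(y,t)$. Swapping $x$ and $y$ and invoking the antisymmetry of $\leq_U$ gives $f(x,t)=f(y,t)$, so $t\in K(f)$ and hence $I(m)\subset K(f)$ for every $m\geq n$.

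The hard part --- really the only subtle point --- is to notice why the argument of Lemma \ref{lemOuvert1} can no longer conclude that $K(f)$ is a neighbourhood of $0$: the comparison $(r_k,t)\preceq(r_{k+1},t)$ that propagates the equality around the circle exists only when $t$ belongs to some $I(n)$, since on a gap between consecutive stripes the order (\ref{eqn:ordZ}) is discrete in the $S^1$-direction. This forces the stripe-wise formulation, and the accumulation of the $I(m)$ at $0$ is exactly what lets the single collapsed level $0$ drag all sufficiently deep stripes into $K(f)$.
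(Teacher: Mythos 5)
Your proposal is correct and follows essentially the same route as the paper's proof: reduce to $0\in K(f)$, extract the basic opens $\alpha_s\times O_s$ from local increasingness at the section $i_0$, use compactness of $S^1$ to get a finite subcover, intersect the $O_s$, note that all sufficiently deep stripes $I(m)$ fall inside that intersection, and then run the chaining-plus-antisymmetry argument at each level $t\in I(m)$, where the first clause of the zebra order applies. Your explicit $\varepsilon$-argument for locating the stripes and your closing remark on why the conclusion cannot be strengthened to a neighbourhood of $0$ are accurate elaborations of steps the paper treats more briefly.
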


\begin{proof}
The arguments are mostly the same as in the proof of Lemma  \ref{lemOuvert1}. 
By hypothesis $0$ belongs to $K(f)$. Let $U\in\overrightarrow{\mathcal{O}}(X)$ such that $f(1,0)\in U$.
For all $s\in S^1$, $f$ is locally increasing at $(s,0)$, so there exists an open neighbourhood $\alpha_s\times O_s$ of $(s, t_0)$ such that $f(\alpha_s\times O_s)\subset U$ and the restriction $f_s:(\alpha_s\times O_s,\preceq_s)\to U$ is increasing with 
$\alpha_s$ denoting a proper open arc and $\preceq_s$ the order $\preceq^{O_s}_{\alpha_s}$ on $\alpha_s\times O_s$.

Since $S^1$ is compact, the open covering made of the proper open arcs of the form $\alpha_s$ admits a finite subcovering  
$$
{\mathcal F}\quad=\quad
\big\{
\alpha_s
\ \big|\ %
s\in J
\big\}\quad.
$$
We denote by $O$ the finite intersection 
$$
\bigcap_{s\in J} O_s\ ,
$$
which is thus an open neighbourhood of $0$. 

Since the sequence $(d_n)_{n\in\nat}$ tends to $0$, there exists 
$n\in\nat$ such that the intervals $I(m)$ are included in $O$ for every $m\geq n$. Let $m$ be such a natural number.
We are to show that $I(m)\subset K(f)$. Given $t\in I(m)$, $x, y\in S^1$, there exists a finite sequence $r_0,\ldots,r_l\in S^1$ such that $r_0=x$, $r_l=y$, and for all $k\in \{1,\ldots,l\}$ there is $s_k\in J$ such that $r_k,r_{k+1}\in\alpha_{s_k}$ and $r_k\leq_k r_{k+1}$ with $\leq_k$ denoting the standard order on the proper open arc $\alpha_{s_k}$. In particular, since $t\in I(m)$, we have $$(r_k,t)\preceq_{s_k}(r_{k+1},t)$$ from which we deduce that $$f(r_k,t)\leq_{U}f(r_{k+1},t)$$ because the restriction $f_{s_k}$ is increasing. By transitivity of $\leq_U$, we have $f(x,t)\leq_U f(y,t)$. 
  By swapping the roles of $x$ and $y$ in the previous reasoning we prove that $f(y,t)\leq_U f(x,t)$. From the anti-symmetry of $\leq_U$, we deduce that $f(x,t)=f(y,t)$, so $t$ belongs to $K(f)$, which therefore contains $I(m)$.
\end{proof}

\begin{prop}
The pair of morphisms $(i_0,c_0:S^1\to Z)$ does not have any coequalizer in the category of (strictly) locally (Nachbin) ordered (Hausdorff) spaces.
\end{prop}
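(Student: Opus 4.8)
Suppose, for contradiction, that a coequalizer $f:Z\to Y$ of the pair $(i_0,c_0)$ exists in one of the categories appearing in the statement. Since $f\circ i_0=f\circ c_0$, the level $0$ belongs to $K(f)$, so Lemma \ref{lem:ouvertIm} supplies an index $n\in\nat$ with $I(m)\subset K(f)$ for every $m\geq n$. Fix $N:=n$; in particular $I(N)\subset K(f)$. The whole strategy is to contradict this by producing a single coequalizing morphism that refuses to collapse the stripe $I(N)$.

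The core of the proof is therefore the construction, for this $N$, of a locally increasing map $q:Z\to Z_N$ that coequalizes $i_0$ and $c_0$, that lands in the category of strictly locally Nachbin ordered Hausdorff spaces (hence in every category occurring in the statement), and whose collapsed set is
$$K(q)\quad=\quad D_N\quad:=\quad\{0\}\ \cup\ \bigcup_{m> N} I(m)\ .$$
The space $Z_N$ is the quotient of $Z$ obtained by crushing the section $S^1\times\{t\}$ to a single point for each level $t\in D_N$; set-theoretically $Z_N=D_N\sqcup\{(s,t)\mid t\in I\setminus D_N\}$, and $q$ is the evident surjection. This mirrors the construction of $X_O$ and $q_O$ from Section \ref{section:cylinder}, the essential difference being that here the collapsing set $D_N$ is \emph{closed} rather than open.

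The decisive feature exploited in the verification is that, by \eqref{eqn:ordZ}, no two points at distinct levels of $Z$ are ever comparable: the local order is purely fibrewise. Consequently the induced order on $Z_N$ is again fibrewise, and two kinds of ordered basis elements suffice. Away from $D_N$ one keeps the elements $\alpha\times A$ of the basis of $Z$ with $A\subset I\setminus D_N$ open, carrying the zebra order. Around a crushed point one uses the images $q(S^1\times J)$ of tubes over small open intervals $J$; since every level of $D_N$ lies below $d_{2N+1}=\inf I(N)$, one may take $J\subset[0,d_{2N+1})$, so that the only stripes met by $J$ are collapsed and the uncollapsed sections appearing in $q(S^1\times J)$ sit at gap levels and are therefore undirected. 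These latter basis elements thus carry the discrete order, are trivially antisymmetric, and are Nachbin. The same observation shows that $q$ is locally increasing even at a collapsed stripe level, as a directed section is sent to a single point, trivialising the order in the image (compare Lemma \ref{lem:qO_morphism}). That $Z_N$ is Hausdorff follows, exactly as in Proposition \ref{prop:XOhausdorff}, from $D_N$ being closed; that it is strictly locally Nachbin ordered follows, as in Proposition \ref{prop:XOclosedOrder} and Remark \ref{rmq:XO strict condtion}, from $D_N$ being closed and a union of whole sections, whence the relevant order-convexity holds automatically. \textbf{The main obstacle is precisely to carry out these checks for a closed collapsing set}: the explicit basis $U_{\alpha,A}$ of Section \ref{section:cylinder} was tailored to an open $O$ and its naive analogue here fails to be open, so one must replace it by the tube-images described above; it is the fibrewise nature of the zebra order that makes this adaptation go through.

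Granting the construction, $q$ coequalizes $i_0$ and $c_0$ (both composites send every $s$ to the crushed point of level $0$) and $Z_N$ is an object of each category in the statement, so the universal property of the coequalizer $f$ yields a morphism $h:Y\to Z_N$ with $q=h\circ f$. Hence every level collapsed by $f$ is collapsed by $q$, that is $K(f)\subset K(q)=D_N$. But $I(N)\subset K(f)$, whereas $I(N)\cap D_N=\emptyset$ and $I(N)\neq\emptyset$, which is absurd. Therefore the pair $(i_0,c_0)$ admits no coequalizer in any of the categories considered.
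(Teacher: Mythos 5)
Your proof is correct and follows the paper's overall strategy: assume a coequalizer $f$ exists, invoke Lemma \ref{lem:ouvertIm} to get a stripe $I(N)\subset K(f)$, and contradict this by exhibiting a coequalizing morphism into an object of all the relevant categories whose collapsed set misses $I(N)$. The difference lies in the auxiliary construction. The paper builds a countable family $f_n:Z\to X_n$ with $K(f_n)=[0,d_{2n}]$, collapsing \emph{every} section at level $\leq d_{2n}$ (stripes and gaps alike), and concludes from $K(f)\subset\bigcap_n[0,d_{2n}]=\{0\}$; its ordered basis on $X_n$ uses the sets $O_{\alpha,A}$ with $A$ restricted to length less than $d_{2n-1}-d_{2n}$, which forces the uncollapsed part of any basis element meeting the collapsed region to lie in a gap, where the zebra order is trivial. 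You instead build a single map tailored to the $N$ produced by the lemma, collapse only the closed set $D_N=\{0\}\cup\bigcup_{m>N}I(m)$ (leaving the gap sections uncollapsed), and use tube images $q(S^1\times J)$ with $J\subset[0,d_{2N+1})$, equipped with the discrete order, as basis elements around collapsed points. This is legitimate for exactly the reason the paper's length restriction is: inside such a tube the uncollapsed levels are gap levels, where the zebra order is trivial, so the discrete order is compatible with the image of the local order and $q$ stays locally increasing; likewise your separation and Nachbin arguments only need $D_N$ closed, which it is since the stripes accumulate at $0\in D_N$. What each approach buys: your single tailored quotient makes the contradiction more direct ($I(N)\subset K(f)\subset D_N$ with $I(N)\cap D_N=\emptyset$), at the cost of a slightly more delicate basis (tube images rather than products), whereas the paper's interval collapse yields a simpler quotient space but needs the family-and-intersection step. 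Both proofs are complete modulo the same routine verifications (basis axioms, Hausdorffness, Nachbin, strictness), which you sketch at a level of detail comparable to the paper's own.
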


\begin{proof}
Let $f:Z\to X$, $g:Z\to Y$ be two locally increasing maps which coequalize $i_0$ and $c_0$. If there is $h:Y\to X$ such that $f=h\circ g$, then $K(g)\subset K(f)$.

We construct, for each $n\in\mathbb{N}$, a locally ordered space $X_n$ (which is actually strict, Nachbin, and Hausdorff) and a locally increasing map $f_n:Z\to X_n$ such that $f_n\circ i_0=f\circ c_0$ and $K(f_n)=[0,d_{2n}]$. If $g$ was the  coequalizer of $(i_0,c_0)$, we would have 
$$
0\quad\in\quad K(g)\quad \subset\quad \bigcap_{n\in \mathbb{N}} K(f_n)\quad=\quad\bigcap_{n\in \mathbb{N}} [0,d_{2n}] 
\quad=\quad \{0\}
$$ 
but this is in contradiction with lemma \ref{lem:ouvertIm}.
    \begin{figure}[!h]
    \caption{The locally ordered space $X_n$}
    \begin{center}
    \includegraphics{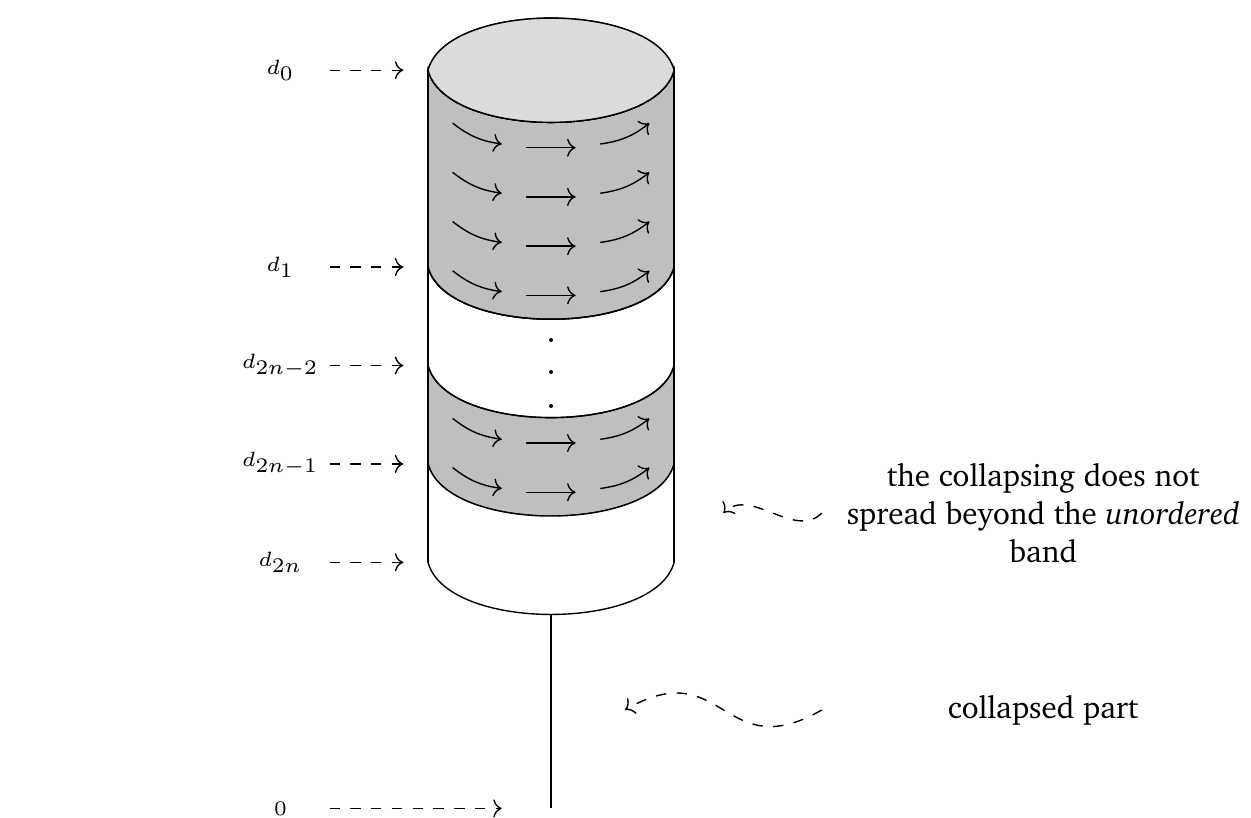}
    \end{center}
    \end{figure}
\noindent
For $n\in\mathbb{N}$ we define the set 
$$
X_n\quad:=\quad
\big[0,\scalebox{1.0}{$d_{2n}]$} 
\sqcup
\big\{(s,t)\:\big|\: s\in S_1, t\in ]\scalebox{1.0}{$d_{2n}$},1]\big\}
$$
and the (set theoretic) map 
$f_n : Z\to X_n$ by $$f_n(s,t)=\left\lbrace 
\begin{array}{ll} 
(s,t) & \text{if } t> d_{2n} \\ 
t & \text{if } t\leq d_{2n}\ .
\end{array} \right.$$
We note that $f_n\circ i_0=f_n\circ c_0$ and $K(f_n)=[0,d_{2n}]$. 
The final topology of $f_n$ is generated by 
the subsets of the form 
$$
O_{\alpha,A}\quad:=\quad
\left\{
\begin{array}{ll}
B & \text{if $A\subseteq\big[0,\scalebox{1.0}{$d_{2n}$}\big[$} \\[+2mm]
B
  \sqcup 
\big(S_1\times C\big)  & \text{if $\scalebox{1.0}{$d_{2n}$}\in A$}\\[+2mm]
\alpha\times C & \text{if $A\subseteq\ \big]\scalebox{1.0}{$d_{2n}$},1\big]$}
\end{array}
\right.
$$
with 
$\alpha$ proper open arc, $A$ an open interval of $I$, $B:=A\cap\big[0,\scalebox{1.0}{$d_{2n}$}\big]$ and $C:=A\cap \big]\scalebox{1.0}{$d_{2n}$},1\big]$. 
In order to define a (strict) ordered basis on $X_n$ that makes the map $f_n$ a morphism of locally ordered spaces, we only consider the intervals $A$ whose length is (strictly) 
less that $d_{2n-1}-d_{2n}$.
The partial order $\leq_{\alpha,A}$ on $O_{\alpha,A}$ is then  defined as the equality in the first and second cases, and matches the partial order described at (\ref{eqn:ordZ}) in the third case.
In particular, the condition on the length of $A$ guarantees that the partial 
order on $B\sqcup (S^1\times C)$ on one hand, and the 
partial order on $\alpha\times A$ from the ordered basis of $Z$ on the other hand (for any proper open arc $\alpha$), both match on $\alpha\times C$: this key observation ensures that $f_n$ is indeed a 
morphism of locally ordered spaces.
\end{proof}

\section{Rational based cylinder}
Previously, we saw that coequalizers in the category of locally ordered spaces may behave differently than in the category of topological space because of the collapsing spreading described in  lemmas \ref{lemOuvert1} and \ref{lem:ouvertIm}. These latter rely on the fact that the directed loops are continuous (\ie they are indexed by $\dcircle$). In this section we replace $\dcircle$ by some of its dense totally disconnected subspace. Then we exhibit a pair of morphisms whose coequalizer exists, and whose underlying space matches with the topological coequalizer. 

\begin{defi}\label{defi-rational_circle}
The subspace $\{\ e^{ix}\ |\ x\in\rational\ \}$ of $S^1$ is denoted by $\Qcircle$.
The \emph{directed rational unit circle} $\dQcircle$ is the subspace $\Qcircle$ with the locally ordered space structure inherited from the directed unit circle, see Remark \ref{rmq:sublocalpospace}.
\end{defi}

We overload the denotations $i_0$ and $c_0$ which now designate 
the mappings $$s\in\Qcircle\mapsto(s,0)\in\dQcircle\times I
\quad
\text{and}
\quad 
s\in\Qcircle\mapsto(1,0)\in\dQcircle\times I$$
As before we identify all the points of the section $\Qcircle\times\{0\}$. We now describe the resulting coequalizer 
in the category of topological spaces. The underlying set is the disjoint union
$$
\{0\}\quad\sqcup\quad
\Qcircle\times]0,1]\quad,
$$
and the quotient map is denoted by $$q\ :\ \Qcircle\times[0,1]\to \{0\}\sqcup(\Qcircle\times]0,1])\quad.$$
A basis of open neighbourhoods of $(s,t)$ with $t>0$ is given by the traces of the products $\alpha\times]a,b[$ with $s\in\alpha$ proper open arc and $0<a<t<b$. 
The associated partial order is given by the restriction of the product order $\leq_\alpha \times =$.

The neighbourhoods of $0$ are a bit harder to describe.
We provide a basis of open neighbourhoods whose elements will be the supports of the partial orders around $0$. 
To this aim, we consider the set $\mathcal{H}$ of all functions $h:\Qcircle\to[0,1]$ which are continuous, strictly positive, and such that
$$
\inf\ h\quad=\quad 0\quad .
$$    
For every function $h\in\mathcal{H}$, we define the set 
$$
O_h
\quad
=
\quad
\{0\}
\quad
\sqcup
\quad
\big\{\ (s,t)\ \in\ \Qcircle\times]0,1]\ \ \big|\ \ t<h(s)\ \big\}\quad.
$$ 
We note that $\mathcal{H}$ is a inf-semilattice\footnote{Any pair of elements has a greatest lower bound} with the minimum being computed pointwise. Moreover, the map $h\mapsto O_h$ is a morphism of inf-semilattices.
We are going to prove that the set of all $O_h$ is a basis of open neighbourhoods of $0$.

Firstly, in order to prove that $O_h$ is an open subset, we  show that $q^{-1}(O_h)=\{(s,t)\in\Qcircle\times I\ |\ t<h(s)\}$ is an open subset of $\Qcircle\times I$. Let $(s,t)\in q^{-1}(O_h)$, since $t< h(s)$, there exists two disjoint open intervals $A$ and $A'$ of $I$ such that $A<A'$\:\footnote{i.e. $a<a'$ holds for all $a\in A$ and $a'\in A'$.}, $t\in A$ and $h(s)\in A'$. By continuity of $h$, there is an open subset $O$ of $\Qcircle$ containing $s$ such that $h(O)\subset A'$. Therefore, $O\times A$ is an open neighbourhood of $(s,t)$ included in $q^{-1}(O_h)$.

Secondly, in order to prove that any open neighbourhood of $0$ contains some $O_h$, we use the following lemma:

\begin{lem}\label{lem:toto}
Let $O$ be an open neighbourhood of $i_0(\Qcircle)$. Let $(s_n)_{n\in\nat}$ be an enumeration of $\Qcircle$. We inductively define a family $(A_j,t_j)_{j\in J}$ (with $J\subseteq\nat$) such that 
\begin{itemize}
\item for all $j\in J$ :
	\begin{itemize}
	\item the set $A_j$ is open,
	\item the real number $t_j$ belongs to $]0,1]$, and
	\item the product $A_j\times [0,t_j[$ is included in $O$,
	\end{itemize}	
\item the sets $A_j$ form a partition of $\Qcircle$, and
\item the greatest lower bound of the set $\{\:t_j\:|\:j\in J\:\}$ is $0$.
\end{itemize}
\end{lem}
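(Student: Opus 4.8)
The plan is to run the inductive construction stage by stage along the enumeration $(s_n)_{n\in\nat}$, creating a new piece $(A_n,t_n)$ exactly at those stages $n$ for which $s_n$ has not yet been captured; the set of such stages will be the index set $J$. Throughout I would maintain the invariant that the finitely many pieces created before stage $n$ are pairwise disjoint clopen subsets of $\Qcircle$ whose union $K$ contains $\{s_0,\dots,s_{n-1}\}$. The two genuine design choices are how to keep the pieces \emph{clopen} and how to force $J$ to be infinite, and it is worth isolating them before running the induction.

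At stage $n$ I would first check whether $s_n\in K$; if so I do nothing and put $n\notin J$. Otherwise $(s_n,0)\in O$ and $O$ is open, so there is a proper open arc $\alpha_n$ containing $s_n$ and a real $r_n>0$ with $(\alpha_n\cap\Qcircle)\times[0,r_n[\ \subseteq O$. Here I would make two adjustments: pick the endpoints of $\alpha_n$ at angles whose exponentials lie outside $\Qcircle$ (possible since such angles are dense), so that $\alpha_n\cap\Qcircle$ is \emph{clopen} in $\Qcircle$; and shrink $\alpha_n$ so that its angular length is at most $2^{-n}$. I then set $A_n:=(\alpha_n\cap\Qcircle)\setminus K$, which is still a clopen neighbourhood of $s_n$ and is now disjoint from $K$, together with $t_n:=\min(r_n,2^{-n})$, and declare $n\in J$.

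Verifying the listed properties is then routine: each $A_j$ is clopen, hence open; $t_j\in\,]0,1]$; and $A_j\times[0,t_j[\ \subseteq(\alpha_j\cap\Qcircle)\times[0,r_j[\ \subseteq O$. Disjointness is immediate from subtracting $K$, and since $s_n$ is captured no later than stage $n$, we get $\bigcup_{j\in J}A_j=\Qcircle$, so the $A_j$ form a partition. Because $t_j\le 2^{-j}$, it only remains to show that $J$ is infinite, for then $\inf\{t_j\mid j\in J\}\le\inf_j 2^{-j}=0$.

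Non-termination is the crux, and I expect it to be the only delicate point. Suppose $J$ were finite with largest element $N$; then $\Qcircle\subseteq\bigcup_{j\le N}A_j\subseteq\bigcup_{j\le N}\alpha_j$. The right-hand side is a finite union of open arcs whose total angular length is at most $\sum_{j\le N}2^{-j}<2<2\pi$, so its complement in $S^1$ is a finite union of closed arcs of strictly positive total length; in particular it contains a nondegenerate closed arc, which meets the dense set $\Qcircle$. This produces a rational point outside $\bigcup_{j\le N}A_j$, contradicting the displayed inclusion. Hence $J$ is infinite and the infimum condition holds. The whole difficulty is thus this length bookkeeping, reconciling the requirement that the pieces \emph{cover} the dense set $\Qcircle$ with the requirement that their heights shrink to $0$; the clopenness trick (endpoints off $\Qcircle$) and the extraction of a basic slice neighbourhood inside $O$ are otherwise entirely standard.
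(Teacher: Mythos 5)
Your construction is correct, and its skeleton---enumerating $\Qcircle$, taking arcs whose endpoints have irrational angles so that their traces on $\Qcircle$ are clopen, and disjointifying each new piece against the (clopen, hence closed) union of the earlier ones---is the same as the paper's. Where you genuinely diverge is on the point you yourself single out as the crux: why infinitely many pieces get created, so that the heights can be driven to $0$. The paper builds non-termination into the induction itself: it maintains as an invariant that the union $U_N$ of the pieces constructed so far is a \emph{proper} subset of $\Qcircle$ (each new clopen piece $A_n$ is explicitly shrunk so that $U_N\cup A_n\neq\Qcircle$), so there is always a smallest uncovered $s_n$, the construction never halts, and the bound $t_j<\frac{1}{j+1}$ then yields $\inf_j t_j=0$. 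You instead let the construction halt if it wants to and rule that out a posteriori by length bookkeeping: capping the angular length of the $j$-th arc by $2^{-j}$ forces any union of your arcs to have total length at most $2<2\pi$, so its complement contains a nondegenerate closed arc, which meets the dense set $\Qcircle$, contradicting that the pieces cover $\Qcircle$. Both arguments are sound. Yours keeps the inductive step minimal and concentrates the difficulty in one clean geometric estimate; the paper's invariant argument uses no metric notions at all, which is the kind of argument that transfers to settings without a natural length on the ambient space. Note also that your cap already implies the paper's invariant (at every finite stage the arcs have total length below $2\pi$, so by your own density argument they cannot cover $\Qcircle$), so the two mechanisms are close, but the way the contradiction is extracted is genuinely different.
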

Then, for any open neighbourhood $U$ of $0$, the set $q^{-1}(U)$ is an open neighbourhood of $i_0(\Qcircle)$. Therefore, by applying the lemma, we get a family $(A_j,t_j)$ from which we define the map $h:\Qcircle\to[0,1]$ which sends $s\in A_j$ to $t_j$. We observe that it belongs to $\mathcal{H}$, and that $q^{-1}(O_h)\subset q^{-1}(U)$. Consequently, we get $O_h\subset U$. This concludes the proof that the set of all $O_h$ is a basis of open neighbourhoods of $0$.
\begin{proof}[Proof of Lemma \ref{lem:toto}]
Since $O$ is open and contains $(s_0,0)$, there exists an open neighbourhood of $(s_0,0)$ of the form $A_0\times [0,t_0[$. 
Taking $A_0$ to be the trace of a proper open arc whose extremities 
are $e^{ia}$ and $e^{ib}$ with $a$ and $b$ in $\real/\rational$, we 
obtain a clopen subset of $\Qcircle$. Moreover, we choose $A_0$ so that $A_0\not=\Qcircle$. Define $J_0=\{0\}$.
\\\ \\
Suppose that we have already defined $A_j$ and $t_j$ for $j\in J_N$, 
with $N+1$ denoting the cardinality of $J_N$. We actually suppose 
that the following stronger hypotheses are satisfied:
\begin{itemize}
\item each $A_j$ is a clopen, we have $t_j < \frac1{j+1}$, and
\item the family of sets $A_j$, with $j\in J_N$, does not cover $\Qcircle$ though it contains $\{s_0,\ldots,s_N\}$. 
\end{itemize}
Let $n$ be the smallest integer such that $s_n$ does not belong to the union $U_N$ of the sets $A_j$ for $j\in J_N$. We have $n>N$ and 
we define $J_{N+1}=J_N\cup\{n\}$. 
We can find a clopen $A_n$ which contains $s_n$ and 
a number $t_n<\frac1{n+1}$ so that $A_n\times[0,t_n[$ is included in $O$. 
The union $U_N$ is closed because so is each $A_j$, so 
we can suppose that $A_n$ does not meet $U_N$. Of course we can also 
restrict $A_n$ so that $U_N\cup A_n\not=\Qcircle$.
\end{proof}

We equip the sets $O_h$ with partial orders $\leq_h$ so that they become the elements of the expected ordered basis containing $0$.  
\\\ \\
By definition, we have $(s,t)\leq_h(s',t')$ when $t=t'$ and there exists a proper open arc $\alpha$ such that $s\leq_\alpha s'$ and $(\alpha\cap \Qcircle)\times \{t\}\subset O_h$ (and of course $0\leq_h0$).

Let $h,h'\in\mathcal{H}$. Since $O_{\min(h,h')}$ is the intersection of $O_h$ and $O_{h'}$, the partial order $\leq_{\min(h,h')}$ matches the restrictions of both $\leq_{h}$ and $\leq_{h'}$. 

The collection of partially ordered sets $\alpha\times]a,b[$ (with $s\in\alpha$ proper open arc and $0<a<b$) and $O_h$ (with $h\in\mathcal{H}$) thus forms a (strict) ordered basis. We denote by $W$ the corresponding locally order space on $\{0\}\sqcup
\Qcircle\times]0,1]$.

\begin{prop}
The quotient map $q$ induces the coequalizer of $i_0$ and $c_0$. 
\end{prop}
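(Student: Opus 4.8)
The plan is to verify the three defining properties of a coequalizer for $q$. That $q$ coequalizes $i_0$ and $c_0$ is immediate, since $q\circ i_0$ and $q\circ c_0$ both send every $s$ to the collapsed point $0$. That $q$ is locally increasing is checked pointwise: away from the section $q$ is locally an order isomorphism onto a basic open poset of the form $\alpha\times\,]a,b[$, while at a point $(s,0)$, given a basic neighbourhood $O_h$ of $q(s,0)=0$, one chooses an arc $\alpha'\ni s$ and an $\varepsilon>0$ with $\varepsilon\le h$ on $\alpha'$ (possible because $h$ is continuous and $h(s)>0$), so that $q$ maps $(\alpha'\cap\Qcircle)\times[0,\varepsilon[$ into $O_h$ increasingly. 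For the universal property, let $f:\dQcircle\times I\to Y$ be locally increasing with $f\circ i_0=f\circ c_0$. Since $q$ is surjective and $f$ is constant on the section $\Qcircle\times\{0\}$, there is a unique set map $\overline f:W\to Y$ with $f=\overline f\circ q$, and, as the topology of $W$ is the final topology of $q$, $\overline f$ is continuous. Everything thus reduces to proving that $\overline f$ is locally increasing.

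At a point $(s,t)$ of $W$ with $t>0$ this is routine. Using Definition \ref{def:morphismLPO} (and replacing a basis element of $Y$ by an open poset $V$ containing $\overline f(s,t)=f(s,t)$) one obtains a basic neighbourhood $\alpha\times\,]a,b[$ of $(s,t)$, which we may shrink so that $0<a$, on which $f$ is increasing into $V$; its image under $q$ is a basic open poset of $W$ on which $q$ is an order isomorphism, so $\overline f$ is increasing there.

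The real work is local increasingness at $0$. Fix an open poset $V\ni\overline f(0)$. For each $s\in\Qcircle$, local increasingness of $f$ at $(s,0)$ yields a proper open arc $\alpha_s\ni s$ (taken shorter than a semicircle) and an $\varepsilon_s>0$ such that $f$ maps $(\alpha_s\cap\Qcircle)\times[0,\varepsilon_s[$ into $V$ and is increasing there. Applying Lemma \ref{lem:toto} to the open neighbourhood $\bigcup_s\alpha_s\times[0,\varepsilon_s[$ of the section, and refining each chosen piece so that it sits inside a single chart, produces a clopen partition $(A_j)_{j\in J}$ of $\Qcircle$ with heights $t_j>0$, $\inf_j t_j=0$, such that each $A_j\times[0,t_j[$ lies in some chart $\alpha_{\sigma_j}\times[0,\varepsilon_{\sigma_j}[$. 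The main obstacle appears here: taking $h$ to be the constant $t_j$ on $A_j$ would let $\leq_h$ relate points of two adjacent pieces through an arc straddling their common (irrational) endpoint, across which $f$ need not be increasing. This is precisely the collapsing that genuinely occurs over the compact circle (Lemma \ref{lemOuvert1}) and that must be prevented here.

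To prevent it we exploit total disconnectedness. Each $A_j$ is the rational trace of an arc with irrational endpoints, so we define $h$ on $A_j$ by a continuous, strictly positive bump bounded by $t_j$ that tends to $0$ towards those endpoints; since the $A_j$ are clopen and disjoint, the resulting $h:\Qcircle\to[0,1]$ is continuous, strictly positive with $\inf h=0$, hence $h\in\mathcal H$. Then $\{(s,t)\mid t<h(s)\}\subset\bigcup_j A_j\times[0,t_j[$, so $f$ sends $q^{-1}(O_h)$ into $V$ and $\overline f(O_h)\subset V$. Moreover, for each level $t>0$ the slice $\{s\mid h(s)>t\}$ stays, within every piece, uniformly away from that piece's endpoints; hence any proper open arc $\alpha$ with $(\alpha\cap\Qcircle)\times\{t\}\subset O_h$ cannot meet two different pieces (it would contain rationals arbitrarily close to a separating endpoint, where $h<t$), so its rational trace lies in a single $A_{j_0}$ and therefore in the short chart $\alpha_{\sigma_{j_0}}$, the standard orders agreeing since all arcs involved are short and nested. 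Consequently, whenever $(s,t)\leq_h(s',t)$ one has $s\leq_{\alpha_{\sigma_{j_0}}}s'$ and $t<t_{j_0}\le\varepsilon_{\sigma_{j_0}}$, whence $f(s,t)\leq_V f(s',t)$; thus $\overline f$ is increasing on $O_h$. This establishes local increasingness at $0$, so $\overline f$ is a morphism and $q$ is the coequalizer in $\LPO$; one checks that $W$ is Hausdorff and locally Nachbin ordered, so the same map is the coequalizer in each corresponding subcategory. The crux, and the reason for the contrast with Section \ref{section:cylinder}, is exactly this ``dipping'' $h$, which uses the totally disconnected nature of $\dQcircle$ to keep $\leq_h$ from forcing any identification beyond the section.
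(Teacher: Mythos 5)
Your proposal is correct and follows essentially the same route as the paper: unique continuous factorization via the final topology, reduction to local increasingness at $0$, a clopen partition of $\Qcircle$ by arcs with irrational endpoints subordinate to charts on which $f$ is increasing (the paper obtains this by redoing the induction of Lemma \ref{lem:toto} with the increasingness condition built in, which is exactly what your ``refinement'' amounts to), and the bump-function $h$ dipping to $0$ at the endpoints so that any arc witnessing $\leq_h$ cannot straddle two pieces. The only differences are presentational, and you additionally spell out some checks the paper leaves implicit (that $q$ is locally increasing, and the Hausdorff/Nachbin status of $W$).
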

\begin{proof}
One easily checks that the map $q$ is locally increasing. Let $f:\dQcircle\times I\to X$ be a locally increasing map such that $f\circ i_0=f\circ c_0$. 
The underlying topology of $W$ is the final one so we have a unique continuous map $g$ from the underlying space of $W$ to that of $X$ such that $f=g\circ q$. The only point of $W$ around which $g$ is not trivially increasing is $0$. This latter case has to be treated carefully. 
Let $U\in\overrightarrow{\mathcal{O}}(X)$ such that $g(0)\in U$.
Let $(s_n)_{n\in\nat}$ be an enumeration of $\Qcircle$. We construct by induction a family $(\alpha_j,t_j)_{j\in J}$ (with $J\subseteq\nat$) such that 
\begin{itemize}
\item for all $j\in J$ :
	\begin{itemize}
	\item $\alpha_j$ is a proper open arc of the form $\arcc{a_j b_j}$ with $a_j,b_j\in\real/\rational$.
	\item the number $t_j$ belongs to $]0,1]$,
	\item $f((\alpha\cap \Qcircle)\times [0,t_j[)\subset U$, and 
	\item one has 	$f(s,t) \leq_U f(s',t)$ when $s\leq_{\alpha_j}s'$ for $s,s'\in\alpha_j\cap\Qcircle$ and $t\in[0,t_j[$
	\end{itemize}	
\item the sets $\alpha_j \cap \Qcircle$ form a partition of $\Qcircle$.
\end{itemize}
Since $f$ is locally increasing at $(s_0,0)$, there exists an open neighbourhood of $(s_0,0)$ of the form $A_0\times [0,t_0[$ such that the restriction of $f$ to $A_0\times [0,t_0[$ with values in $U$ is increasing. 
Taking $A_0$ to be the trace of a proper open arc $\alpha_0=\arcc{a_0b_0}$ with $a_0$ and $b_0$ in $\real/\rational$, we obtain a clopen subset of $\Qcircle$. Define $J_0=\{0\}$.
Suppose that we have already defined $\alpha_j$ and $t_j$ for $j\in J_N$, 
with $N+1$ denoting the cardinality of $J_N$. 
If the union $U_N$ of the sets $\alpha_j\cap \Qcircle$ for $j\in J_N$ is $\Qcircle$, then $J:=J_n$ and the construction is over. Otherwise, let $n$ be the smallest integer such that $s_n$ does not belong to $U_N$. We have $n>N$ and 
we define $J_{N+1}=J_N\cup\{n\}$. 
We can find a clopen $A_n$ which contains $s_n$ and 
a number $0<t_n\leq 1$ so that the restriction of $f$ to $A_n\times[0,t_n[$ with values in $U$ is increasing. 

The union $U_N$ is closed because so is each $\alpha_j\cap \Qcircle$, so 
we can suppose that $A_n$ does not meet $U_N$.
Finally, we can assume that $A_n$ is the trace of a proper open arc $\alpha_n=\arcc{a_nb_n}$ with $a_n$ and $b_n$ in $\real/\rational$.

For each $j\in J$, consider a continuous map $\phi_j:\alpha_j\to\ [0,t_j]$ that is strictly 
positive, and tends to $0$ on $a_j$ and $b_j$ (basically a \emph{bump function} would be more than enough). Then let $h$ be the map whose the restriction to $\alpha_j$ is 
$$
x\ \mapsto\ \phi_j(x)\qquad\text{with }\quad x\ \in\ \alpha_j
$$
One readily deduces from the definition of $h$ that it belongs 
to $\mathcal H$ and satisfies $g(O_h)\subset U$. 
We now check that the restriction of $g:W\to X$ to $O_h$ 
is order-preserving from $\leq_h$ to $\leq_U$. 
Suppose that we have $(s,t)\leq_h(s',t)$ in $O_h$.
By definition of the partial order $\leq_h$ there exists 
an open proper arc $\alpha$ such that $s\leq_\alpha s'$ and 
$(\alpha\cap\Qcircle)\times\{t\}\subseteq O_h$.
Let $j\in J$ be such that $s\in\alpha_j$. Recall that $\alpha$ and $\alpha_j$ are the images of $]a,b[$ and $]a_j,b_j[$ under the complex exponential map $t\in\real\mapsto e^{it}\in 
S^1$ with $b-a<2\pi$, $b_j-a_j<2\pi$. Moreover, once $a$ and $b$ are fixed, one can choose $a_j$ and $b_j$ such that   
$s=e^{ix}$ for some $x\in\ ]a,b[\ \cap\ ]a_j,b_j[$. 
We also have $x'$ in $]a,b[$ such 
that $x\leq x'$ and $e^{ix'}=s'$. 
We cannot have $b_j<x'$ otherwise the interval $]x,b_j[$ would be included in $]a,b[$ so we would have $x''\in\ ]a,b[\ \cap\ \rational$ with $h(e^{ix''})$ arbitrarily small. In particular 
$(e^{ix''},t)$ would not belong to $O_h$.  
Moreover we have $b_j\not=x'$ because one is rational while the other is not. Since the standard order on $\real$ is total, we have $x'<b_j$. Hence both $x$ and $x'$ belong to $]a_j,b_j[$, and we have $x\leq x'$ so $s\leq_{\alpha_j}s'$. Moreover $t< h(s)\leq t_j$. It follows that $f(s,t)\leq_Uf(s',t)$.
\end{proof}

\emph{Competing interests: The authors declare none.}

\end{document}